\newtheorem {theorem}{Theorem}
\newtheorem {lemma}[theorem]{Lemma}
\newtheorem {corollary}[theorem]{Corollary}
\theoremstyle{definition}
\newtheorem {definition}[theorem]{Definition}
\newtheorem {question}[theorem]{Question}
\newtheorem *{remark}{Remark}
\newcommand{\hfkhat}{\widehat{\mathit{HFK}}}
\newcommand{\cfkhat}{\widehat{\mathit{CFK}}}
\newcommand{\cfahat}{\widehat{\mathit{CFA}}}
\newcommand{\cfdhat}{\widehat{\mathit{CFD}}}
\newcommand{\cfkminus}{\mathit{CFK}^{-}}
\title{Cable knots are not thin}
\author{Subhankar Dey}
\address{Department of Mathematics, IIT Palakkad}
\email{subhankardey@iitpkd.ac.in}
\date{}
\begin{document}
\maketitle

\begin{abstract}
Using the Bordered Floer theory of Lipshitz-Ozsv\'ath-Thurston we prove that the $(p,q)$-cables of any non-trivial knots are not Heegaard Floer homologically thin. Using the proof and a theorem of Zemke, we find a larger set of satellite knots which is a proper superset of the set of all cable knots, having the same property.
\end{abstract}

%%%%%%%%%%%%%%%%%%
\section{Introduction} 
  In his seminal work \cite{thurston82, thurston86}, Thurston showed that a knot, based on the geometry on its complement, is either one of three types: torus, satellite, or hyperbolic. Apart from that classification, there is a family of knots that are easy to describe diagrammatically, namely, alternating knots, which admit projections onto generic planes, that `alternate' between under-passes and over-passes. It was proved by Menasco in \cite{menasco84} that an alternating knot is either a torus or a hyperbolic knot.
\begin{theorem}[\cite{menasco84}] 
\label{thm:mer_lemma}
If L is a non-split prime alternating link, and if $S \subset S^3 \smallsetminus L$ is a closed incompressible surface, then $S$ contains a circle which is isotopic in $S^3 \smallsetminus L$ to a meridian of $L$.
\end{theorem}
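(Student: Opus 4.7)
The plan is to analyze the incompressible surface $S$ through its intersections with two natural spheres arising from the alternating diagram. Begin with an alternating projection of $L$ onto a 2-sphere $S^2 \subset S^3$, replace a small neighborhood of each crossing by a 3-ball (a \emph{crossing ball}), and let $S^2_+$ and $S^2_-$ be the spheres obtained by pushing $S^2$ across the crossing balls so that the over-strand (respectively under-strand) of each crossing sits above $S^2_+$ (respectively below $S^2_-$). The link $L$ then lies in the region between $S^2_+$ and $S^2_-$, and the two 3-balls $B_\pm$ bounded by these spheres give the basic decomposition of $S^3$ that the argument exploits.

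The first step is to put $S$ in general position with respect to $S^2_+ \cup S^2_-$, so that the intersections form a disjoint collection of simple closed curves. I would then minimize $|S \cap (S^2_+ \cup S^2_-)|$ using the hypotheses: an innermost loop on $S^2_\pm$ bounding a disk disjoint from $L$ can be removed by an isotopy of $S$ using incompressibility in $S^3 \smallsetminus L$, while non-splitness of $L$ rules out certain trivial sphere components and primeness rules out an essential sphere meeting $L$ transversally in two points (which would correspond to a factoring sphere).

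The core of the proof is then a combinatorial analysis of the pieces of $S \cap B_\pm$. Each such piece is a planar surface whose boundary alternates between arcs on $S^2_\pm$ and arcs on the boundaries of the crossing balls, and the alternating hypothesis forces a rigid pairing pattern on how these arcs can match up at each crossing. The strategy is to assume, for contradiction, that no simple closed curve on $S$ is isotopic in $S^3 \smallsetminus L$ to a meridian, and then to show that the resulting arc and curve system on $S^2_\pm$ either produces a compressing disk for $S$ (contradicting incompressibility) or assembles into a splitting sphere or factoring sphere for $L$ (contradicting non-splitness or primeness).

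The main obstacle is this final combinatorial analysis. It is here that the alternating condition is used essentially: one must classify the local behavior of $S$ at each crossing ball, track how arcs and saddles propagate through $B_+$ and $B_-$ under the alternating over/under pattern, and rule out every non-meridional possibility. In Menasco's original treatment this case analysis occupies the bulk of the paper, and I would expect the same delicate bookkeeping to dominate any reworking of the argument, with the key lemma being that the only curves on $S^2_\pm$ that survive the simplification procedure and are consistent with the alternating diagram are meridional ones.
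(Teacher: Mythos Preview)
The paper does not actually prove this theorem; it is stated as a citation from Menasco \cite{menasco} and used only as background, with the single remark that ``Menasco's proof of Theorem~\ref{thm:mer_lemma} makes direct use of alternating knot diagrams.'' There is therefore nothing in the paper to compare your proposal against.

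That said, your outline is an accurate sketch of Menasco's original argument: the crossing-ball setup, the pair of spheres $S^2_\pm$, general-position and innermost-disk simplifications using incompressibility, and the combinatorial analysis of arcs at crossings driven by the alternating condition are exactly the ingredients of \cite{menasco}. Your identification of the final case analysis as the main technical burden, and of the roles of non-splitness and primeness in ruling out trivial and factoring spheres, is also correct. So your proposal is consistent both with Menasco's actual proof and with the paper's one-line description of it.
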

The above theorem and the fact that the exterior of a satellite knot contains an incompressible torus implies that prime alternating knots are not satellite. Menasco's proof of Theorem \ref{thm:mer_lemma} makes direct use of alternating knot diagrams. More recently, Ozsv\'ath-Szab\'o defined a larger class of knots, called \emph{quasi-alternating knots} (see \cite[Definition 3.1]{osbranch05}).
\begin{definition} Let $\mathcal{Q}$ denote the smallest set of links such that 
 \begin{itemize}
     \item the unknot is a member of $\mathcal{Q}$. 
     \item if $L$ is a member of $\mathcal{Q}$, then there exists a projection of $L$ and a crossing $c$ in that projection such that 
     \begin{enumerate}
     \item both smoothings of $L$ at $c$ (see Figure \ref{skein}), $L_0$ and $L_{\infty}$ are in $\mathcal{Q}$,
     \item $\det(L) = \det(L_0) + \det(L_{\infty})$.
     \end{enumerate}
 \end{itemize}
 \end{definition}
 
 \begin{figure}
    \centering
    \includegraphics[width=8cm]{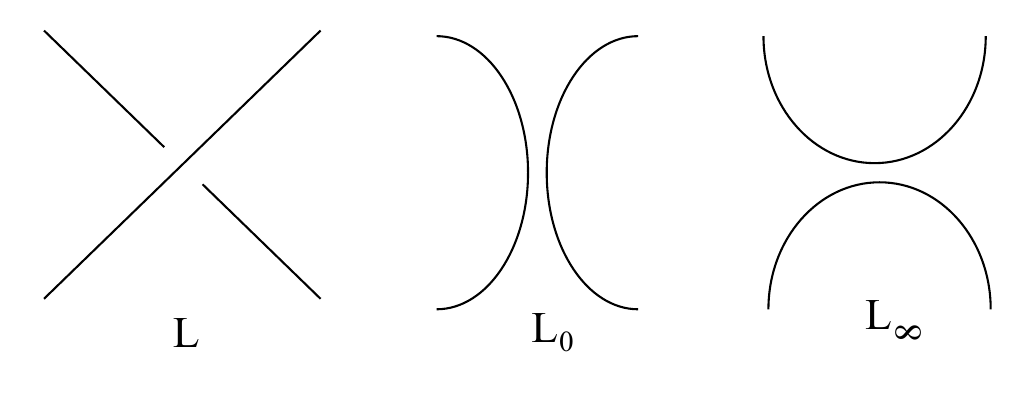}
    \caption{smoothings of a crossing}
    \label{skein}
\end{figure} 
 
The knot Floer homology of knots belonging to this set exhibit the same kind of characteristics as the knot Floer homology of alternating knots, hence the name quasi-alternating. Specifically, one of the characteristics is that quasi-alternating knots are \emph{Heegaard Floer homologically thin}, which is to say that the knot Floer homology of a quasi-alternating knot is supported in gradings where the difference between Alexander and Maslov gradings is fixed \cite[Theorem 1.2]{mo07} and they are completely determined by the signature of the knot and its Alexander polynomial. Throughout the paper, we will refer to this as \textit{Floer-thin}. Similar notion of thinness exists in Khovanov homology as well. Ozsv\'ath-Szab\'o proved that double branched covers of quasi-alternating knots are $L$-spaces, i.e. $\widehat{\textit{HF}}(\Sigma(K))\cong\mathbb{Z}_2^{\det(K)}$ \cite[Proposition 3.3]{osbranch05}, which was utilized by Gordon-Lidman in \cite[Theorem 1.3; Theorem 1.4]{gl14} to show that cable knots are not quasi-alternating. Recently Boyer-Gordon-Hu in \cite[Theorem 2.15]{bgh21} proved a folklore conjecture that prime satellite knots are not quasi-alternating, which can be thought of as a generalization of the mentioned result of Menasco from alternating to quasi-alternating knots. In particular they prove the following:

\begin{theorem}[Theorem 2.15 in \cite{bgh21}] A prime $\mathbb{Z}/2$ Khovanov-thin link is either hyperbolic or a $(2,m)$-torus link. In particular, this holds for prime, quasi-alternating links.
    
\end{theorem}

In \cite{greene10}, Greene showed that the set of all Floer-thin knots properly contains the set of all quasi-alternating knots. Therefore it is natural to ask if the same is true in the setting of knot Floer homology:

\begin{question}
\label{conj:qanotthin}
Are there prime satellite knots which are Floer-thin?
\end{question}

 The methods in the mentioned papers do not generalize for Floer-thin knots. Ideally, one would like to prove an analog of Theorem \ref{thm:mer_lemma} for Floer-thin knots in the context of Heegaard Floer homology. Recently Petkova-Wong in \cite{pw20} showed that satelite knots with twisted Mazur patterns are not Floer-thin. However, in \cite{zib22} Zibrowius found the first example of a prime satellite knot which is Floer-thin, which naturally raises the curiosity concerning Question \ref{conj:qanotthin} if there are more such satellite knots.

 %Meanwhile, we can try to verify the above conjecture for certain classes of satellite knots. In this regard, it is reasonable to try to show that a satellite knot fails at least one of the two aforementioned characteristics of quasi-alternating knots: that quasi-alternating knots are Floer homologically thin, and that their double branched covers are L-spaces.

 Showing that a non-trivial (satellite) knot with trivial Alexander polynomial cannot be Floer-thin is quite straight-forward. Recall that for a knot $K$ with (symmetric) Alexander polynomial $\Delta_K(t) = \sum_{i=0}^{g} a_i (t^i + t^{-i})$,
\begin{gather*}
 a_i = \chi(\hfkhat(S^3,K,i))  = \sum_{\alpha \in \hfkhat(S^3,K,i)} (-1)^{m_\alpha}  
\end{gather*}
where $m_{\alpha}$ is the Maslov grading of $\alpha$. For Floer-thin knots, all elements in a fixed Alexander grading of its knot Floer homology have the same Maslov grading (see \cite[Theorem 1.2]{mo07}), in other words, $\hfkhat(S^3,K,i) = \mathbb{Z}_2^{|a_i|}$. This and the fact that knot Floer homology detects the Seifert genus of a knot (i.e. $\hfkhat(S^3,K,g_3(K))\not= 0$) imply that $K$ must be the unknot. Hence satellite knots with trivial Alexander polynomial, for example, Whitehead doubles, are not quasi-alternating. For a satellite knot with an arbitrary pattern and having non-trivial Alexander polynomial, this argument cannot be used. For non-trivial cable knots, we can prove the following.
\begin{theorem}
\label{thm:main} The $(p,q)$-cable of any non-trivial knot $K$ is not Floer-thin.

\end{theorem}
This theorem recovers a part of result of Boyer-Gordon-Hu from \cite{bgh21}:
\begin{corollary}
If $K$ is non-trivial, $K_{p,q}$ is not quasi-alternating. 
\end{corollary}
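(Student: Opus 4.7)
The plan is to assume $K_{p,q}$ is Floer homologically thin for some non-trivial $K$ and derive a contradiction. By the Manolescu--Ozsv\'ath structure theorem \cite{manolescu1}, thinness forces $\hfkhat(K_{p,q},i) \cong \mathbb{Z}^{|a_i|}$ in every Alexander grading $i$, where $a_i$ is the coefficient of $t^i$ in the symmetric Alexander polynomial $\Delta_{K_{p,q}}(t)=\Delta_K(t^p)\cdot\Delta_{T_{p,q}}(t)$, with all generators lying on a single $\delta$-diagonal determined by $\sigma(K_{p,q})$.

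The first case to handle is when $\Delta_K(t)=1$. Then $\Delta_{K_{p,q}}=\Delta_{T_{p,q}}$, whose support lies in $[-g(T_{p,q}), g(T_{p,q})]$ with $g(T_{p,q})=(p-1)(q-1)/2$, and so thinness would force $\hfkhat(K_{p,q},i)=0$ for all $|i|>g(T_{p,q})$. But knot Floer homology detects the Seifert genus, giving $\hfkhat(K_{p,q},g(K_{p,q}))\neq 0$. Since $g(K_{p,q})=pg(K)+g(T_{p,q})$ with $g(K)\geq 1$ (as $K$ is non-trivial) and $p\geq 2$, we obtain $g(K_{p,q})>g(T_{p,q})$, contradicting thinness. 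This dispatches companions like Whitehead doubles.

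In the main case, $\Delta_K(t)\neq 1$, the strategy is to compare the cable complex of $K_{p,q}$ with the thin prediction. Concretely, I plan to use a cable formula relating $\hfkhat(K_{p,q})$ to $\cfkhat(K)$ --- via Hedden's large-framing computation, Petkova's bordered Heegaard Floer model, or the equivalent tensor-product description with the $T_{p,q}$-bimodule --- and exhibit two generators of $\hfkhat(K_{p,q})$ at distinct values of $M-A$. The idea is that for a non-trivial $K$ with $\Delta_K\neq 1$, the complex $\cfkhat(K)$ carries information beyond $\sigma(K)$: cabling produces both a staircase contribution from the pattern $T_{p,q}$ and shifted copies of the non-trivial part of $\cfkhat(K)$, and the relative bigradings of these pieces cannot collapse onto a single diagonal.

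Many configurations can already be ruled out at the numerical level, either by showing the necessary identity $\|\Delta_{K_{p,q}}\|_1=|\Delta_{K_{p,q}}(-1)|$ fails (so the predicted total rank is inconsistent with thinness) or by computing $\tau(K_{p,q})$ from Hom's cable formula and $\sigma(K_{p,q})$ from Litherland's formula and verifying $\tau(K_{p,q})\neq -\sigma(K_{p,q})/2$. The key obstacle is the remaining configurations where both of these necessary conditions happen to hold simultaneously --- for instance, certain cables whose pattern $T_{p,q}$ exhibits thin-like arithmetic, as with the $(3,2)$-cable of the trefoil. In that regime one must carry out an explicit Maslov-grading computation inside the cable complex to locate an off-diagonal generator, and I expect this final explicit bigrading comparison to be the technical heart of the argument.
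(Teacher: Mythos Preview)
In the paper this corollary is a one-line consequence of Theorem~\ref{thm:main} together with the fact that quasi-alternating knots are Floer homologically thin \cite{manolescu1}; no separate argument is given. What you have written is really a proposed proof of Theorem~\ref{thm:main} itself, so let me evaluate it as such.

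Your Case~1 argument (companions with $\Delta_K=1$) is correct and self-contained; the paper makes essentially the same observation in the introduction when discussing Whitehead doubles, though it does not single this case out in the proof of Theorem~\ref{thm:main}.

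The genuine gap is Case~2. You correctly diagnose that the numerical tests $\|\Delta_{K_{p,q}}\|_1=|\Delta_{K_{p,q}}(-1)|$ and $\tau(K_{p,q})=-\sigma(K_{p,q})/2$ do not rule out all cables, and you state that an explicit bigrading comparison will be ``the technical heart of the argument'' --- but you do not carry it out. That missing computation is exactly the content of the paper's proof. The paper works in the bordered model $\cfaminus(D^2\times S^1,T_{p,q})\boxtimes\cfdhat(X_K,r)$, building the pattern side from Ording's genus-one diagram and reading the companion side off the Lipshitz--Ozsv\'ath--Thurston description of $\cfdhat$ of a framed knot complement in terms of a simplified basis for $\cfkminus(K)$. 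Two explicit generators (typically $a\otimes\xi_2$ and $b_1\otimes\kappa^i_{\ell_i}$, with variants according to $\epsilon(K)$ and the sign of $t=2\tau(K)-r$) are shown to survive in homology, and their gradings are computed in the double-coset grading group to exhibit distinct values of $M-A$. The case split is over $\epsilon(K)\in\{-1,0,1\}$ and $t$, not over whether $\Delta_K$ is trivial. Your suggested fallback via $\tau$ versus $\sigma$ is discussed in the paper's Remark as a method that handles many cables (e.g.\ most iterated torus knots) but provably does not cover all of them, which is precisely why the bordered bigrading computation is required.
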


In \cite[Theorem 1.1]{zemke19}, Zemke proves that if there is a ribbon concordance $C$ from $K_0$ to $K_1$, then there is an injection $F_C : \widehat{HFK}(K_0) \rightarrow \widehat{HFK}(K_1)$, which preserves both Alexander and Maslov gradings. Combined with Zemke's result, the proof of Theorem \ref{thm:main} implies the following corollary.

\begin{corollary}
\label{morethin}
If there is a ribbon concordance from a cable knot $K_{p,q}$ to another knot $K'$, then $K'$ is not Floer-thin. In particular, $K'$ is not quasi-alternating.  

\end{corollary}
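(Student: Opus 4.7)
The plan is to combine Theorem \ref{thm:main} with Zemke's injectivity theorem in a direct, essentially formal, way. Assuming the companion $K$ of $K_{p,q}$ is non-trivial (a hypothesis implicit in the corollary, since otherwise $K_{p,q}$ is just a torus knot and Theorem \ref{thm:main} does not apply), Theorem \ref{thm:main} tells us that $\widehat{HFK}(K_{p,q})$ fails to be Floer homologically thin. Unwinding the definition of thinness, this produces two homogeneous generators $\alpha,\beta \in \widehat{HFK}(K_{p,q})$ sharing a common Alexander grading $i$ but carrying distinct Maslov gradings $m_\alpha \neq m_\beta$.

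Next I would invoke \cite[Theorem 1.1]{zemke}: a ribbon concordance $C$ from $K_{p,q}$ to $K'$ induces an injection
\[
F_C : \widehat{HFK}(K_{p,q}) \hookrightarrow \widehat{HFK}(K')
\]
preserving both the Alexander and the Maslov gradings. Applying $F_C$ to $\alpha$ and $\beta$ yields two distinct elements of $\widehat{HFK}(K')$ with identical Alexander grading $i$ but different Maslov gradings, so the quantity $m-A$ is not constant on the support of $\widehat{HFK}(K')$. Hence $K'$ is not Floer homologically thin. The second assertion is then immediate from \cite[Theorem 1.2]{manolescu1}, which guarantees that every quasi-alternating knot is Floer homologically thin; contrapositively, $K'$ cannot be quasi-alternating.

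There is no real obstacle in this argument, as all the substantive content has been absorbed into Theorem \ref{thm:main} and Zemke's theorem; the deduction is a one-step compositional move. The one subtlety worth flagging is a directional convention: Zemke's map is induced by a concordance running from the source to the target, so the corollary should be read as asserting that $K_{p,q}$ ribbon concords to $K'$. For the reverse direction the same formal argument would not apply verbatim, since an injection going the other way does not obstruct thinness of the source.
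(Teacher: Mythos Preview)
Your argument is correct and matches the paper's, which simply states the corollary as an immediate consequence of Theorem~\ref{thm:main} together with Zemke's bigrading-preserving injection. One small slip: unwinding non-thinness does not in general produce two classes with the \emph{same} Alexander grading and different Maslov gradings; it only yields classes with $M(\alpha)-A(\alpha)\neq M(\beta)-A(\beta)$ (indeed, the two elements exhibited in the proof of Theorem~\ref{thm:main} typically have distinct Alexander gradings). This does not affect your proof, since a bigrading-preserving injection carries such a pair to a pair in $\widehat{HFK}(K')$ with distinct values of $M-A$, which is exactly the conclusion you draw. Your remark on the direction of the ribbon concordance is apt.
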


Miyazaki in \cite{miyazaki98} proved that non-trivial band sums between knots are ribbon concordant to the trivial band sum between them i.e. connected sum of those knots. One can start with a cable knot $K_{p,q}$ and place a number of unknots inside a small solid 3-ball inside the solid torus, such that they are unlinked to the pattern, and then join them by some non-trivial bands between them and the $T_{p,q}$ pattern sitting already inside that neighborhood, such that the bands stays inside the neighborhood. Now if one considers the resulting knot inside the solid torus as the pattern and take the companion as $K$, then the resulting satellite knot $P(K)$ is ribbon concordant to $K_{p,q}$ by Miyazaki's result. Hence by Corollary \ref{morethin}, $P(K)$ is not Floer-thin. Figure \ref{band} is such an example of a pattern $P$, which is ribbon concordant to $T_{5,3}$. 
\begin{figure}
    \centering
    \includegraphics[width=8cm]{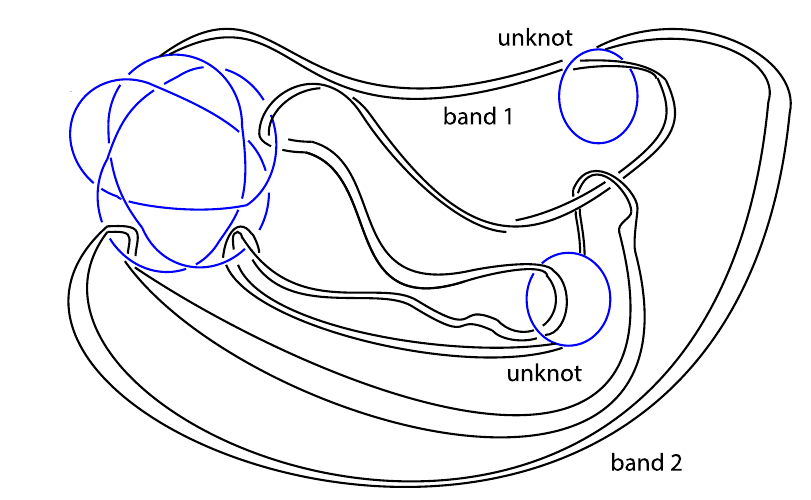}
    \caption{A `not-thin' schematic pattern: obtained by ribbon summing $T(5,3)$ with two unknots, both living inside a small ball in $S^1 \times D^2$}
    \label{band}
\end{figure} 
\begin{remark}
 An alternative way to prove that a knot is not Floer-thin could be by comparing  $\tau$-invariant, a knot concordance invariant defined by Ozsv\'ath-Sz\'abo in \cite{os4genus03} and Rasmussen in \cite{rasmussen03}, and the signature of the knot, since for thin knots $\tau(K)$ equals negative of the half of its signature. In recent years, there has been much work studying effects of cabling on various knot invariants. Shinohara in \cite{shinohara71} studied signature and satellite operation. Hedden studied effect of cabling for knots on knot Floer homology  in \cite{heddencabling05}, \cite{heddencabling09}. Binns and the author studied the cabling operations for links in \cite{cablinglink}. Petkova in \cite{petkova13} and later Hom in \cite{hom11} studied $\tau$ invariant and cabling operation, while Chen in \cite{chen16} studied $\Upsilon$ invariant and cabling and Chakraborty in \cite{c19} studied Legendrian knot invariant $\hat{\theta}$ and cabling.
 In particular for $\tau$ of a cable knot, Hedden provided an inequality in \cite{heddencabling09}, later improved by Hom in \cite{hom11}, in terms of the $\tau$ of the companion knots. Using that and Shinohara's result \cite[Theorem 9]{shinohara71} about signature of cable knots, one can prove that a certain class of cable knots are not thin (eg. most of the iterated torus knots). We thank Abhishek Mallick for pointing this out to the author. %We take a different approach to prove Theorem \ref{thm:main} in full generality. 

 It is also interesting to compare the methods applied in this paper with the cabling formula of Hanselman-Watson in \cite{cablingimmersed}.
 
\end{remark}

\begin{subsection}*{Main Idea and Organization}
In order to prove Theorem \ref{thm:main}, we will use bordered Floer homology package of Lipshitz-Ozsv\'ath-Thurston in \cite{lot08}, which is tailor made to study satellite knots. The main idea of the proof is to produce two non-zero elements in the knot Floer homology of a given cable knot, using the splicing Theorem \ref{splice}, such that the difference between their Alexander grading is different than the difference between their Maslov grading. Thus by \cite[Theorem 1.2]{mo07}, they are not Floer-thin. To do that we will look at $\cfahat(D^2 \times S^1, T_{p,q})$ and $\cfdhat(X_K)$ simultaneously to find two elements from both modules such that their box tensor product is non-zero in the homology. The splicing theorem allows us to find two such elements, living inside the knot Floer chain complex of the cable knot. Then we show that those elements are non-zero in the knot Floer homology of the cable knot and calculate their grading to get the desired result. 

In  Section 2, we briefly discuss the algebraic structure of bordered Floer homology. In Section 3, we use the aforementioned splicing theorem to find two elements in the knot Floer homology of any cable knot such that the difference between their Alexander grading and that of their Maslov grading are not same, to deduce that it cannot be Floer-thin. 
    
    \end{subsection}

%%%%%%%%%%%%%%%%%%
\subsection*{Acknowledgements}
I am grateful to my PhD advisor, \c{C}a\u{g}atay Kutluhan, for his encouragement, guidance and inputs during the preparation of the first draft. I would like to thank Matt Hedden whose initial suggestion to look into bordered Floer homology led to the result and also for his feedback on an earlier draft. Special thanks to William Menasco and Xingru Zhang for several valuable discussions during the preparation of the first draft, and Abhishek Mallick for suggesting this problem and a number of helpful discussions. I would also like to forward my gratitude to the anonymous referees for being patient in pointing out flaws in the draft, and helping to improve the draft immensely. I would also like to thank Holt Bodish for pointing out a mistake in an earlier version of the paper and to Claudius Zibrowius for providing invaluable feedback on the draft. An individual research grant of the DFG, project no. 505125645, supported my work.

%%%%%%%%%%%%%%%%%%
\section{Background on bordered Floer homology}
We start by describing briefly some features of bordered Floer homology which we will be using to prove the main theorem. We will only be interested in the case when a compact manifold has torus boundary. A bordered 3-manifold (with torus boundary) is a compact manifold with (torus) boundary, along with a diffeomorphism $ \phi : T^2 \rightarrow \partial Y$, up to isotopy fixing a neighborhood of a point, cf. \cite{hl16} and \cite [Definition 1.4] {lotnotes}. 

There are several versions of pairing theorem \cite[Theorem 1.3] {lot08} in bordered Floer homology that come in handy to study Heegaard Floer holomogy of a closed 3-manifold obtained by splicing two manifolds with torus boundaries or a manifold cut along a torus (see \cite{hl16},\cite{hanselman17}). We will be interested in the following splicing theorem:
\begin{theorem}[Theorem 11.19 in \cite{lot08}]
\label{splice}
Let $Y_1$ and $Y_2$ be 3-manifolds with torus boundary and $K_1 \subset Y_1$ be a knot in $Y_1$. If gluing $Y_1$ and $Y_2$ along their boundaries
$ \partial Y_1 = \partial Y_2 = F$ produces a null-homologous knot $K \subset Y_1 \cup_F Y_2$, then there is a homotopy equivalence of $\mathbb{Z}$-filtered chain complexes:

\begin{equation}
    \cfkhat(Y,K) \simeq \cfahat(Y_1,K_1)\boxtimes \cfdhat(Y_2)
\end{equation}
%and the following equivalences of $\mathbb{F}[U]$-modules : \\
%\begin{equation}
   % g\cfkminus(Y,K) \simeq \cfaminus(Y_1,K_1) \boxtimes \cfdhat(Y_2)
%\end{equation}

which respects the gradings.
%Here $g\cfkminus(Y,K)$ denotes the associated graded object.
\end{theorem}

To make sense about the modules $\cfdhat$, $\cfahat$ mentioned above, we start by reviewing the setting of bordered Floer Homology.

For a compact manifold $Y$ with torus boundary, a bordered Heegaard diagram is a tuple $\mathcal{H} = (\Sigma_g, \alpha_1^a, \alpha_2^a,\alpha_1^c,\alpha_2^c, \cdots, \alpha_{g-1}^c,\beta_1,\cdots,\beta_g,z)$ such that 

\begin{itemize}
    \item $\Sigma_g$ is a compact, oriented surface of genus $g$ with one boundary component,
    \item $\bm{\beta} = \{\beta_1, \cdots, \beta_g\}$ is a $g$-tuple of pairwise disjoint circles in the interior of $\Sigma_g$,
    \item $\bm{\alpha^c} = \{\alpha_1^c, \cdots, \alpha_{g-1}^c\}$ is a ($g-1$) tuple of pairwise disjoint circles in the interior of $\Sigma_g$,
    \item $\bm{\alpha^a} = \{\alpha_1^a,\alpha_2^a\}$ is a 2-tuple of pairwise disjoint arcs in $\Sigma_g$ with boundary in $\partial \Sigma_g$,
    \item $z$ is a base point in $\partial \Sigma_g \smallsetminus  \bm{\alpha^a} $,
    \item $\bm{\alpha^a} \cap \bm{\alpha^c} = \emptyset$,
    \item both $\Sigma_g \smallsetminus (\bm{\alpha^a} \cup \bm{\alpha^c})$ and $\Sigma_g \smallsetminus \bm{\beta}$ are connected.
    
\end{itemize}

To provide an example, given a knot $K \subset S^3$, we can find a bordered Heegaard diagram of $S^3 \smallsetminus K$, by first starting with a \textit{doubly pointed} Heegaard diagram of $(S^3,K)$, $(\Sigma,\bm{\alpha},\bm{\beta},w,z)$ i.e. $ (\Sigma, \bm{\alpha},\bm{\beta},z)$ is a Heegaard diagram of $S^3$. Recall that the basepoints $w, z$, are two points on the Heegaard surface such that joining $w$ to $z$ in the complement of $\bm{\beta}$ curves and joining $z$ to $w$ in the complement of $\bm{\alpha}$ curves and subsequently pushing those arcs into the $\bm{\alpha}$ and $\bm{\beta}$ handlebodies, respectively, produces $K$. Given such a diagram, first one can stabilize the Heegaard diagram by doing a surgery at two points near the base points $w,z$. Then draw a longitude $\beta_g$ of the knot that goes over that newly attached handle (see \cite[Fig.4]{osknot04}) and thus gets a meridian of the knot, living on the attached 2-handle, call it $\alpha_g$.  Hence $(\Sigma', \bm{\alpha} \cup \alpha_g,\bm{\beta} \cup \beta_g, z,w )$ is a stabilized doubly pointed knot diagram. Take $\lambda$ to be a closed curve, parallel to $\beta_g$, intersecting $\alpha_g$ at one point, say $p$. Let $D$ be a small neighborhood disk around $p$.  Then the complement of $int(D)$ specifies a bordered Heegaard diagram that represents a bordered Heegaard diagram of $S^3 \smallsetminus K$, specifically $(\Sigma' \setminus int (D^2),\bm{\alpha} \cup \alpha'_g \cup \lambda', \bm{\beta} \cup \beta_g, z')$, where $\alpha'_g = \alpha_g \setminus int (D) = \alpha_g \cap \Sigma' , \lambda' = \lambda \setminus int(D) = \lambda \cap \Sigma'$ and $z'$ lies on $\partial D$, away from the endpoints of the $\alpha$-arcs.

To get a bordered Heegaard diagram of an $n$-framed knot complement, instead of considering $\lambda$, one needs to take $\lambda_n$, which can be obtained by winding $\lambda$ around $\alpha_g, n$ times. After that operation, one of the $\alpha$-arcs would be $ \lambda'_n = \lambda_n \setminus int(D) =  \lambda_n \cap \Sigma'$, instead of $\lambda'$ mentioned earlier (see \cite[Section 2.6]{lee18}).

One can define a \textit{doubly pointed} bordered Heegaard diagram of $K$ in a manifold $Y$, by finding such a bordered Heegaard diagram of $Y$, adding an extra basepoint in which would represent $K \subset Y$, just as a doubly pointed diagram for a knot in a closed 3-manifold. 

Given a bordered Heegaard diagram $\mathcal{H} = (\Sigma, \bm{\alpha},\bm{\beta},z) $ of a manifold, a \emph{generator} $x = \{x_1,x_2,\cdots,x_g\} \subset \bm{\alpha} \cap \bm{\beta} $ is such that exactly one point can lie on each $\bm{\beta}$ circle, exactly one point can lie on each $\bm{\alpha}$-circle and at most one point can lie on each $\bm{\alpha}$-arcs, where $\bm{\alpha} = \bm{\alpha^a} \cup \bm{\alpha^c}$. Let $\mathfrak{G}(\mathcal{H})$ be the set of all such generators.

\vspace{.3in}

\noindent With the above settings in place, we now discuss the algebraic preliminaries regarding bordered Floer homology that will eventually lead to explaining the ingredients of the splicing Theorem \ref{splice}.

Let $\mathcal{Z}$ denote the boundary of $D$ in a bordered Heegaard diagram. We consider the \textit{pointed matched circle} ($\mathcal{Z},\textbf{a},M, z)$  with $4k$ marked points 
$\textbf{a} = \{a_1, \cdots, a_{2k}, a'_1,\cdots,a'_{2k}\}$
and $M(a_i) = a'_i, i = 1,2,\cdots,2k $, a pairing of the points and $z$ being a base point on $\mathcal{Z}$. For $Y$, a compact 3-manifold with torus boundary, we only focus on the case where $k=1$ and we call $\textbf{a} = \{a_0,a_1,a_2,a_3\}$ such that $M(a_0) = a_2,M(a_1) = a_3$ and the points $a_0,a_1,a_2,a_3$ are labeled on $\mathcal{Z}$ in a clockwise direction.

A pointed matched circle represents a compact surface with one boundary component in the following way: consider $\mathcal{Z} \times [0,1] $ and then add bands i.e. two-dimensional one-handles with feet on the pairs of matched points on the circle on $\mathcal{Z} \times \{0\}$ and then attach a disk to the new boundary component created after adding the bands. If we cap off the remaining boundary component with a disk, we call that closed surface of genus-$k$ $F(\mathcal{Z})$. In particular, when $k=1$, $F(\mathcal{Z})$ is a torus and we are interested in this case only since all the non-closed manifolds that appear in this paper have torus boundary. Let $\alpha_1^a, \alpha_2^a$ denote the arcs (both lying on the surface), which are the cores of the 1-handles, from $a_0$ to $a_2$ and from $a_1$ to $a_3$, respectively (See Figure 3). 

\begin{figure}[h!]
    \centering
    \includegraphics[width=6cm]{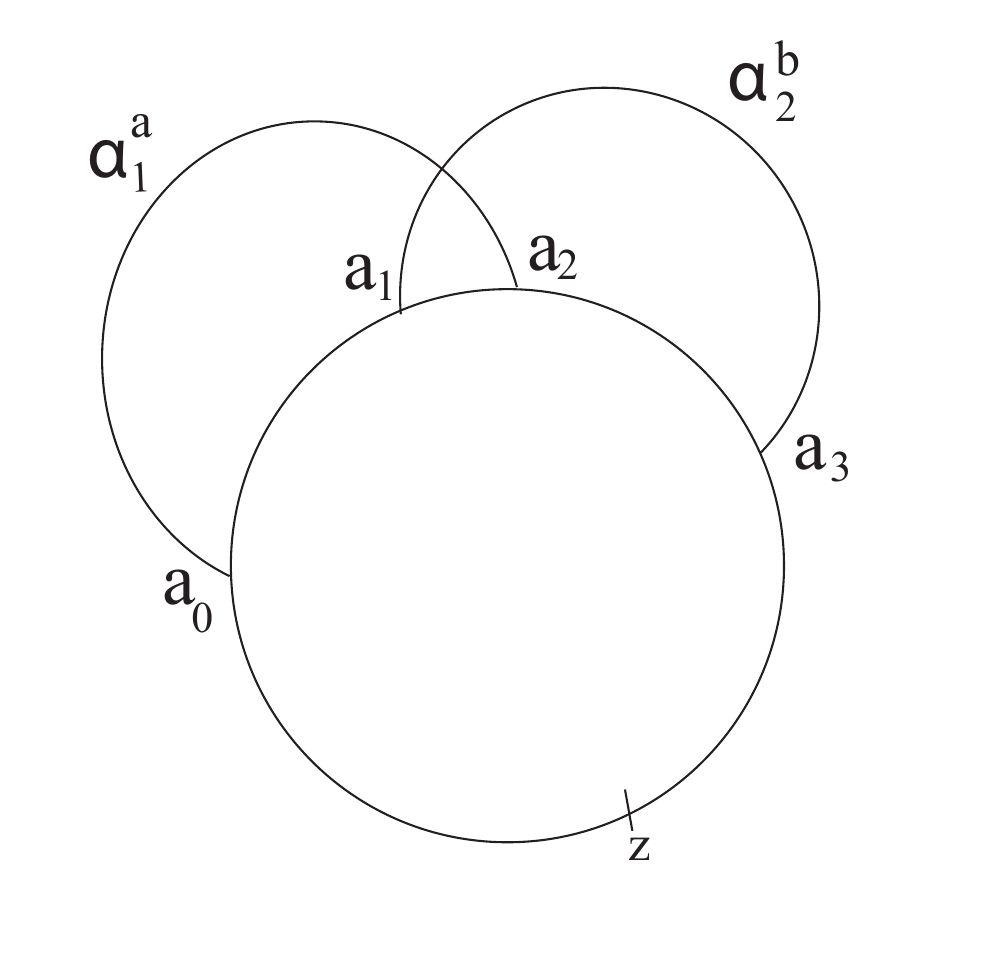}
    \caption{A pointed matched circle}
    \label{circle}
\end{figure} 

For the case in hand i.e. when $F(\mathcal{Z})$ represents a torus, $\mathcal{A}(\mathcal{Z})$ is an unital algebra over $\mathbb{F}$ with six \textit{`Reeb'} elements $\rho_1,\rho_2,\rho_3,\rho_{12},\rho_{23},\rho_{123}$ (in this case, one can think $\rho_1,\rho_2,\rho_3$ to be the arcs in $\mathcal{Z}$ between $a_0,a_1$, between $a_1,a_2$ and between $a_2,a_3$, respectively) and two \textit{idempotents} $\iota_0,\iota_1$ such that $\iota_0 + \iota_1 = 1$ and these generators have these non-zero relations:
    \[\rho_1 = \iota_0 \rho_1 = \rho_1 \iota_1 \quad  \rho_2 = \iota_1 \rho_2 = \rho_2 \iota_0 \quad \rho_3 = \iota_0 \rho_3 = \rho_3 \iota_1\] 
    \[\iota_0 \rho_{12} = \rho_{12} \iota_0 = \rho_{12} \quad \iota_1 \rho_{23} = \rho_{23} \iota_1 = \rho_{23} \quad \iota_0 \rho_{123} = \rho_{123} \iota_1 = \rho_{123}\]
    \[\rho_1 \rho_2 = \rho_{12} \quad \rho_2 \rho_3 = \rho_{23} \quad \rho_{12}\rho_3 = \rho_1 \rho_{23} = \rho_{123} \]

Through out the paper, if not mentioned otherwise, we denote $\mathcal{A}(\mathcal{Z})$ by $\mathcal{A}$. For more in depth discussion on $\mathcal{A}(\mathcal{Z})$ for $k > 1$, see Chapter 3 of \cite{lot08} or Chapter 1.4 \cite{lotnotes} for a short exposition. 

In \cite{lot08}, Lipshitz-Ozsv\'ath-Thurston associate to a bordered 3-manifold $(Y,\phi: \partial Y \rightarrow F(\mathcal{Z}))$, $\cfdhat(Y)$ and $\cfahat(Y)$, which are right $\mathcal{A}_{\infty}$ module over $\mathcal{A}(\mathcal{Z})$ and left $dg$ module over $\mathcal{A}(-\mathcal{Z})$ module, respectively. We concern ourselves with the modules $\cfahat(Y,K)$ and $\cfdhat(Y)$ for this paper. 

A vector space $M$ over $\mathbb{F}$ is said to have a \emph{(right) type A structure} over $\mathcal{A}$ or is said to be a right $\mathcal{A}_{\infty}$-module over $\mathcal{A}$ (where $\mathcal{A}$ is a graded, unital algebra) if $M$ is equipped with a right action of $\mathcal{I}$ (the set of all idempotents in $\mathcal{A}$, in our case $\mathcal{I} = \{\iota_0,\iota_1\}$), such that $M = M\iota_0 \oplus M\iota_1 $, as a vector space, and multiplication maps
\begin{align*}
 m_{k+1} : M\otimes \mathcal{A}^{\otimes k} \rightarrow M  , \quad k \geq 0  
\end{align*}
satisfying the $\mathcal{A}_{\infty}$ relations
\begin{align*}
    0 = \sum_{i=0}^{k} m_{k-i+1}(m_{i+1}(x, a_1, a_2, \cdots, a_i), a_{i+1}, \cdots, a_k) \\ +  \sum_{i=1}^{k-1} m_k(x, a_1, \cdots, a_{i-1}, a_ia_{i+1}, a _{i+2}, \cdots, a_k)
\end{align*}
and the unital conditions
\begin{eqnarray*}
m_2(x,1) &=& x \\
m_k(x, \cdots, 1, \cdots ) &=& 0, \quad k >2.
\end{eqnarray*}
We say that M is $bounded$ if $m_k = 0$ for sufficiently large $k$.

Now $\cfahat(Y,K)$ is a $\mathbb{F}$-module generated by $\mathfrak{G}(\mathcal{H})$, where the right action by $\mathcal{I}$ is defined by  
\begin{align*}
    \bm{x} \cdot \iota_0 = 
    \begin{cases} 
    \bm{x} &\quad\text{if $\bm{x}$ \emph{does} occupy the arc $\alpha^a_1$} \\
    0 &\quad\text{otherwise}
\end{cases} \\
  \bm{x} \cdot\iota_1 = 
  \begin{cases}
  \bm{x} &\quad\text{if $\bm{x}$ \emph{does} occupy the arc $\alpha^a_2$} \\
  0 &\quad\text{otherwise}
  \end{cases}
\end{align*}
The right $\mathcal{A}_{\infty}$-module structure on $\cfahat(Y,K)$ is determined by the multiplication maps
\begin{align*}
    m_{k+1} : \cfahat(Y,K) \otimes \mathcal{A}^{\otimes k} \rightarrow \cfahat(Y,K)
\end{align*}
defined by
\begin{align*}
    & m_{k+1}(\bm{x}, \rho_{i_1}, \cdots,\rho_{i_k}) = \sum_{\bm{y} \in \mathfrak{G}(\mathcal{H})} \sum\limits_{\substack{\{B\in \pi_2(\bm{x},\bm{y})\;|\;\it{ind}(B,(\rho_{i_1},\cdots,\rho_{i_k}))= 1,\\n_w(B) = 0\}}} \#(\mathcal{M}^{B}(\bm{x},\bm{y},(\rho_{i_1},\cdots, \rho_{i_k}))) \bm{y}, \\
    & m_2(\bm{x},1) = \bm{x}, \\
   & m_{k+1}(\bm{x},\cdots,1,\cdots) = 0 \quad \rm{for}\; k > 1,
\end{align*}
where $\mathcal{M}(\bm{x},\bm{y},(\rho_{i_1},\cdots,\rho_{i_k}))$ is as described in \cite[Chapter 2] {lotnotes}. For $\bm{x}, \bm{y} \in \mathfrak{G}(\mathcal{H})$, both $g$-tuples,  let $S$ be a smooth surface with boundary and punctures on its boundary and label $g$ of these punctures $-$, another $g$ punctures $+$, and the remaining punctures $e$. Then $\mathcal{M}(\bm{x},\bm{y},(\rho_{i_1},\cdots,\rho_{i_k}))$ consists of holomorphic embeddings
\begin{gather*}
    u : (S, \partial S) \rightarrow (\Sigma \smallsetminus \{z\}  \times [0,1] \times \mathbb{R}, (\bm{\alpha}\times \{1\} \times \mathbb{R}) \cup (\bm{\beta} \times \{0\} \times \mathbb{R}))
\end{gather*}
such that 
\begin{itemize}
    \item at the $-$ punctures, $u$ is asymptotic to $\bm{x} \times [0,1] \times \{ -\infty\}$, 
    \item at the $+$ punctures, $u$ is asymptotic to $\bm{y} \times [0,1] \times \{ \infty\}$,
    \item at the punctures labeled $e$, $u$ is asymptotic to the chords $\rho_i \times (1,t_i) \in \partial \Sigma \times \{1\} \times \mathbb{R}$, and $t_{i_1}<t_{i_2}<\cdots<t_{i_k}$.
    % \item the composition of the extended map and projection to $[0,1]\times \mathbb{R}$ is a $g$-fold branched cover and $u|_{\partial S}$ is injective.
\end{itemize}

If $\widetilde{\mathcal{M}}(\bm{x},\bm{y},(\rho_1,\rho_2,\cdots,\rho_n))$ denotes the moduli space of $J$-holomorphic maps satisfying the above properties, then $\mathcal{M}(\bm{x},\bm{y},(\rho_1,\rho_2,\cdots,\rho_n)) =  {\widetilde{\mathcal{M}}(\bm{x},\bm{y},(\rho_1,\rho_2,\cdots,\rho_n))} / \mathbb{R} $ (where $\mathbb{R}$ denotes the translation action in the image). Given $B \in \pi_2(\bm{x},\bm{y}), \mathcal{M}^{B}(\bm{x},\bm{y},(\rho_1,\cdots,\rho_n))$ is the set of all such $J$-holomorphic maps which has the same homology as that of $B$. See \cite[Chapter 5]{lot08} for details. The above defined family of multiplication maps $\{m_i\}$ counts the number of points in the described moduli space. \\

Now we describe the other module structure mentioned above. A vector space $N$ over $\mathbb{F}$ is said to have a \emph{(left) type D structure} over $\mathcal{A}$, if $N$ is equipped with a left action of $\mathcal{I}$ (set of all idempotent elements $\{\iota_0, \iota_1 \}$ in $\mathcal{A}$) such that $N = \iota_0 N \oplus \iota_1 N$, as a vector space, and a map
\begin{align*}
    \delta_1 : N \rightarrow \mathcal{A} \otimes N 
\end{align*}
satisfying the type D condition:
\begin{align*}
    (\mu \otimes id_N) \circ (id_{\mathcal{A}} \otimes \delta_1) \circ \delta_1 = 0
\end{align*}
where $\mu: \mathcal{A}\otimes \mathcal{A} \rightarrow \mathcal{A}$ is the multiplication map in $\mathcal{A}$. Also, we inductively define maps 
\begin{gather*}
    \delta_k : N \rightarrow \mathcal{A}^{\otimes k} \otimes N
\end{gather*}
such that $\delta_0 = id_N, \delta_i = (id_{\mathcal{A}^{\otimes (i-1)}} \otimes \delta_1) \circ \delta_{i-1}$. We say $N$ is $bounded$ if $\delta_k$ is zero for sufficiently large $k$. \\

$\cfdhat(Y)$ is a $\mathbb{F}$-vector space generated by $\mathfrak{G}(\mathcal{H})$, where the left action by $\mathcal{I}$ is defined by 

\begin{align*}
    \iota_0 \cdot \bm{x} = 
    \begin{cases} 
    \bm{x} &\quad\text{if $\bm{x}$ \emph{does not}  occupy the arc $\alpha^a_1$} \\
    0 &\quad\text{otherwise}
\end{cases} \\
  \iota_1 \cdot \bm{x} = 
  \begin{cases}
  \bm{x} &\quad\text{if $\bm{x}$ \emph{does not} occupy the arc $\alpha^a_2$} \\
  0 &\quad\text{otherwise}
  \end{cases}
\end{align*}
and the left $\mathcal{A}$-module structure on $\cfdhat(Y)$ is defined by :
\begin{align*}
    \delta_1 : \cfdhat(Y) \rightarrow \mathcal{A} \otimes \cfdhat(Y) 
\end{align*}
defined by :
\begin{gather*}
    \delta_1(\bm{x}) = \sum_{\bm{y} \in \mathfrak{G}(\mathcal{H})} \sum_{\{B\in \pi_2(\bm{x},\bm{y})\;|\; \it{ind}(B,(\rho_{i_1},\cdots,\rho_{i_k}))= 1\}} \#(\mathcal{M}^{B}(\bm{x},\bm{y},(\rho_{i_1},\cdots,\rho_{i_k})) \rho_{i_1} \cdots \rho_{i_k} \bm{y} 
\end{gather*}

Recall that for each $\bm{x} \in \mathfrak{G}(\mathcal{H}), \pi_2(\bm{x},\bm{x})$ i.e. collection of all homotopy classes of $J$-holomorphic \textit{Whitney} disks connecting $\bm{x$} to itself, forms a group where the multiplication is given by concatenation of disks. We can think of $B$ as a domain on the Heegaard surface and a linear combination of the regions in $\Sigma \smallsetminus (\bm{\alpha} \cup \bm{\beta})$. Elements of $\pi_2(\bm{x},\bm{x})$ are called \emph{periodic domains}, which is naturally isomorphic to $H_2(Y,\partial Y)$. A non-trivial class $B\in\pi_2(\bm{x},\bm{y})$ is called \emph{positive} if all its local multiplicities are non-negative. The Heegaard diagram $\mathcal{H}$ is called \emph{provincially admissible} if it has no positive periodic domains with multiplicity $0$ everywhere along $\partial B$. The Heegaard diagram $\mathcal{H}$ is called \emph{admissible} if it has no positive periodic domains.
 
Provincial admissibility of $\mathcal{H}$ ensures that the above mentioned maps $m_i$ and $\delta_1$ are well-defined. Admissibility of $\mathcal{H}$ ensures that $\cfahat(Y,K)$ and $\cfdhat(Y)$ are bounded. Compare with \cite[Chapter 4,5]{osmanifold04} and \cite[Chapter 4] {lot08}. \\

Now the definitions of $\cfdhat(Y_2)$ and $\cfahat(Y_1,K_1)$ in place, we describe the operation between these two modules, mentioned in Equations (1) and (2).
If one of $\cfahat(Y_1,K_1)$ or $\cfdhat(Y_2)$ is bounded, then the \textit{box tensor product} $\cfahat(Y_1,K_1) \boxtimes \cfdhat(Y_2)$ is the $\mathbb{F}$-module $\cfahat(Y_1,K_1) \otimes_{\mathcal{I}} \cfdhat(Y_2)$ equipped with the differential :
\begin{align*}
    \partial^{\boxtimes}(\bm{x} \otimes \bm{y}) = \sum_{k=0}^{\infty} (m_{k+1}\otimes id|_{\cfdhat})(\bm{x}\otimes \delta_k(\bm{y}))
\end{align*}
The finiteness of the sum is ensured by boundedness of any one of $\cfahat(Y_1,K_1),\cfdhat(Y_2)$. 

For the case that we are interested in, writing $\delta_1$ map in terms of the elements of $\mathcal{A}(T^2)$ helps. Let $\rho_{\emptyset} = \iota_0 + \iota_1 = 1$ and then rewrite $\delta_1$ as 
\begin{align*}
    \delta_1 = \sum_i \rho_i \otimes D_i
\end{align*}
where $i$ runs over $\{\emptyset,1,2,3,12,23,123 \} $ and $D_i : \cfdhat \rightarrow \cfdhat$ are called coefficient maps. 

In this notation , the differential in $\cfahat \boxtimes \cfdhat$ can be written in following way:
\begin{align*}
    \partial^{\boxtimes} (\bm{x} \otimes \bm{y}) = \sum_k m_{k+1}(\bm{x}, \rho_{i_1},\rho_{i_2},\cdots,\rho_{i_k}) \otimes D_{i_k} \circ \cdots \circ D_{i_2}\circ D_{i_1}(\bm{y})
\end{align*}
 where $k$ runs over all such sequence $i_1, i_2, \cdots, i_k$ of $k$ elements from $\{ \emptyset, 1,2,3,12,23,123\}$ (including the empty sequence when $k = 0$).

 \begin{subsection}{From $CFK^{-}(S^3,K)$ to $\widehat{CFD}(X_K)$}
 
 We discuss here some materials that stem from the knot Floer homology setup which we need to borrow. Recall that for a knot $K\subset S^3$, $CFK^{\infty}(K)$ is a filtered chain complex, obtained from the Heegaard Floer chain complex of the ambient manifold $S^3$ i.e. $CF^{\infty}(S^3)$ by introducing additional filtration, which is dictated by the knot $K$. Also recall that $CF^{\infty}(S^3)$ is a $\mathbb{F}[U,U^{-1}]$-module over $\mathfrak{G}(\mathcal{H})$ for a Heegaard diagram $\mathcal{H}$ of $S^3$.

We will consider $CFK^{-}(K)$, a chain complex generated by $\mathfrak{G}(\mathcal{H})$ and the differential in this bi-filtered chain complex is given by  
\begin{align*}
    \partial^{-}(\bm{x}) = \sum_{\bm{y} \in \mathbb{T}_\alpha \cap \mathbb{T}_\beta } \sum_{\{\phi \in \pi_2(x,y)|\mu({\phi}) = 1\}} \#(\widehat{\mathcal{M}}({\phi}))\cdot U^{n_w(\phi)} \cdot \bm{y}
\end{align*}
where $\widehat{\mathcal{M}}(\phi)$ denotes the moduli space of $J$-holomorphic disks representing the homotopy type of $\phi$, $\mathcal{M}(\phi)$, quotiented out by the natural action of $\mathbb{R}$ on this moduli space and $\mu(\phi)$ denotes the `expected dimension' of $\mathcal{M}(\phi)$, see \cite{osknot04} for detailed discussions. Setting $U=0$ in the above differential, defines the differential $\hat{\partial}$ for $\cfkhat$, and the homology of that is denoted by $\widehat{HFK}(S^3,K)$.
$CFK^{-}$ denotes $CF^{-}: \{[x,i] \in CF^{\infty}| i<0\}$, with an extra filtration by the Alexander grading $j$.
For a fixed $j$, $\hfkhat(S^3,K,j)$ is the homology of the chain complex  $(\cfkhat(S^3,K,j), \hat{\partial})$, where $\cfkhat(S^3,K,j)$ is generated by all such $\bm{x}$, such that
\begin{align*}
    A(\bm{x}) = \frac{\langle c_1(\underline{s_m}(\bm{x})), [\widehat{F}] \rangle}{2} = j
\end{align*}
where $\widehat{F}$ is a capped-off Seifert surface of $K$ in $S^3_0(K)$. Since $Y$ is a homology sphere, the definition is independent of choice of Seifert surface of $K$ in $Y$. Also, $\underline {s_m}(\bm{x}) \in Spin^c(S^3,K)$, a relative $Spin^c$ structure (see \cite[Section 2.3]{osknot04}).\\

In practice, given a knot $K$ in $S^3$, the knot Floer complex $CFK^{\infty}(S^3,K)$ is viewed in $(i,j)$-plane, where $j$ is the Alexander grading induced by the knot and $i$ denotes the negative power of $U$. The general rule to draw the knot complex is this: on $j$ axis, for a fixed $j$, we put $dim(\hfkhat(S^3,K,j))$-many points at the coordinate $(0,j)$  and then extend that to the whole $(i,j)$-plane by translating through $U$ and $U^{-1}$ (i.e. multiplying $U$ pushes any $[\bm{y},0,m]$ to $[\bm{y},-1,m-1]$, while multiplying by $U^{-1}$ pushes $[\bm{y},0,m]$ to $[\bm{y},1,m+1]$). We assume the complex $CFK^{\infty}(K)$ is \textit{reduced}, which is same as saying $C\{i,j\} = \hfkhat(S^3,K,j-i)$, where $C\{i,j\}$ is all the generators in $CFK^{\infty}(K)$ with $U$-coordinate being $i$, and the $j$-coordinate being $j$. A reduced complex thus means in which there is no length $0$ differential.
This is due to the fact that a filtered chain complex is always filtered chain homotopic to a reduced complex, see \cite[Reduction Lemma]{hw18}. In practice, the boundary maps $\partial^{\infty}$ are drawn by arrows emanating from some generator(s), pointing towards the generator(s) that live(s) in their boundary. A reduced chain complex ensures that the boundary arrows are pointing downwards (i.e. when the boundary map strictly reduces the Alexander grading), pointing to the left (i.e. when the boundary map strictly increases the $U$-power) or both (i.e. when the boundary map points to south-west direction i.e. when the Alexander grading decreases and $U$-power increases, at the same time). 
 
%Denoting $C\{i \leq 0\}$ by $\cfkminus(S^3,K)$, the
Reduction Lemma implies
\begin{gather*}
  \it{dim}_{\mathbb{F}[U]}(\cfkminus(S^3,K)) = \it{dim}_{\mathbb{F}}(\hfkhat(S^3,K)) = 2n +1, \text{for some $n \ge 0$.}   
\end{gather*}
For a $\mathbb{Z}\oplus\mathbb{Z}$-filtered complex $C$ and a given filtered basis $\{x_i\}$, we call an operation on the basis a \emph{filtered change of the basis} if that operation replaces some basis element $x_j$ by $\sum_{i=1}^{m} a_i y_i$ such that both filtrations of each $a_iy_i$ is less than or equal to those of $x_j$, for all $i =1,2,\cdots, m$, where $\{y_j\}$ is also a filtered basis of $C$. 
We call a $\mathbb{F}[U]$ basis $\{ \xi_0, \xi_1,\xi_2, \cdots, \xi_{2n} \}$ of $\cfkminus$ \emph{vertically simplified} if
\begin{itemize}
    \item $\partial^{\it{vert}}(\xi_{2i -1}) = \xi_{2i}$ (mod $U \cdot C^{-}$) for $i=1,\cdots,n.$ 
    \item $A(\xi_{2i-1}) - A(\xi_{2i}) = k_i > 0.$
    \item $\xi_0$ is the generator of the vertical homology.
\end{itemize}
    where $\xi_i \in C\{i=0\}, i=0,1,\cdots,2n $ and $\partial^{\it{vert}} = \partial^{\infty}|_{C\{i=0\}}$.

Similarly, we can also define a filtered basis  $\{\eta_0, \eta_1,\eta_2, \cdots, \eta_{2n} \}$ of $\cfkminus$ to be  \emph{horizontally simplified} where
\begin{itemize}
    \item $\partial^{\it{hor}} (\eta_{2p-1}) = U^{l_p} \cdot \eta_{2p}$ (mod the associated graded object of $CFK^{-}$ with respect to the Alexander grading $j$, corresponding to $j = A(\eta_{2p-1}) - 1$), for $p = 1,2,\cdots,n.$
    \item $A(\eta_{2p}) - A(\eta_{2p-1}) = l_p > 0.$ 
    \item $\eta_0$ is the generator of the horizontal homology.
\end{itemize}
where $\eta_p \in C\{j=0\} ,p = 0,1,\cdots,2n$ and $\partial^{\it{hor}} = \partial^{\infty}|_{C\{j=0\}}$.

Lipshitz-Ozsv\'ath-Thurston in \cite[Theorem 11.57] {lot08} and Hom in \cite[Lemma 2.1]{hom11} proved that $\cfkminus$ \textit{always} admits vertically and horizontally simplified bases. Also the facts that $C\{i=0\}$ and $C\{j=0\}$ are homotopy equivalent and $\{\ell_1,\ell_2,\cdots,\ell_n\} = \{k_1,k_2,\cdots,k_n\}$ are the same sets, follow from the symmetry of knot Floer homology under reversing the roles of the marked points $w,z$, thus the orientation of the knot, see \cite[Proposition 3.8]{osknot04}.

Now we recall Lipshitz-Ozsv\'ath-Thurston's algorithm from \cite[Theorem 11.26, A.11]{lot08} to find the complete set of generators for $\cfdhat(X_K)$ given $\cfkminus(K)$, where $X_K = S^3 \smallsetminus K$, where the knot is taken to be $r$-framed. 
\begin{theorem}[Theorem 11.26, A.11 in \cite{lot08}]
\label{lotmain}
With notation as above, if $X_K$ denotes the complement of the knot $K$ with an integer framing $r$, then $\cfdhat(X_K)$ has this following description : 
\begin{itemize}
    \item $\iota_0(\widehat{CFD}(X_K))$ is of dimension 2n+1 and is generated by $\{ \xi_0, \xi_1, \cdots ,\xi_{2n}\}$ or \\ $\{ \eta_0,\eta_1, \cdots, \eta_{2n}\}.$
    \item $\iota_1(\widehat{CFD}(X_K))$ is generated by : 
    \begin{align*}
       \bigcup_{i \in \{1,2,\cdots,n\}} \{\kappa_1^i, \kappa_2^i, \cdots, \kappa_{\ell_i}^i\} \cup \bigcup_{j \in \{1,2, \cdots, n\}} \{ \lambda_1^j, \lambda_2^j \cdots,\lambda_{k_j}^j \} \cup \{\mu_1,\cdots,\mu_t\}, where
    \end{align*} 
    \item For each vertical arrow $\xi_{2i-1} \rightarrow \xi_{2i}$ of length $\ell_i$, we have $\kappa_1^i ,\cdots ,\kappa_{\ell_i}^i$ (subspace generated by these is called vertical chain) with following differentials:
    \begin{align*}
        \xi_{2i-1} \xrightarrow{D_1} \kappa_1^i \xleftarrow{D_{23}} \cdots \xleftarrow{D_{23}} \kappa_{\ell_i}^i \xleftarrow{D_{123}} \xi_{2i} 
    \end{align*}
    \item For each horizontal arrow $\eta_{2j-1} \rightarrow \eta_{2j} $ of length $k_j$, we have $\lambda_1^j, \cdots, \lambda_{k_j}^j$ (subspace generated by these is called horizontal chain) with following differentials:
    \begin{align*}
        \eta_{2j-1}\xrightarrow{D_3} \lambda_1^j \xrightarrow{D_{23}} \cdots \xrightarrow{D_{23}} \lambda_{k_j}^j \xrightarrow{D_2} \eta_{2j}
    \end{align*}
    \item{If $t = 2\tau(K) - r$, then we have another additional set of generators $\{ \mu_1,\cdots,\mu_{|t|}\}$ (subspace generated by these is called $unstable$ $chain$) with following differentials: }
    \begin{align*}
        \begin{cases}
        \xi_0 \xrightarrow{D_1} \mu_1 \xleftarrow{D_{23}} \mu_2 \xleftarrow{D_{23}} \cdots \xleftarrow{D_{23}} \mu_t \xleftarrow{D_3} \eta_0 & \quad \text{if $t > 0$} \\
        \xi_0 \xrightarrow{D_{12}} \eta_0 & \quad \text{if $t = 0$} \\
        \xi_0 \xrightarrow{D_{123}} \mu_1 \xrightarrow{D_{23}} \mu_2 \xrightarrow{D_{23}} \cdots \xrightarrow{D_{23}} \mu_{|t|} \xrightarrow{D_2} \eta_0 & \quad \text{if $t < 0$}
        \end{cases}
    \end{align*}
\end{itemize}
where $\tau(K) =$ min$\{p| i_p :C\{i=0,j \leq p\} \rightarrow C\{i=0\}$ induces surjection in homology\}, which is a concordance-invariant defined by Ozsv\'ath-Szab\'o in \cite{os4genus03} and Rasmussen in \cite{rasmussen03}. \\
The gradings are determined as follows: 
\begin{itemize}
    \item The grading set is $G / \lambda^{-1} gr(\rho_{23})^{-r}gr(\rho_{12})^{-1}$ 
    \item Grading of any element $\bm{x}_0$ in $\iota_0(\cfdhat(X_K,r))$, represented by a generator $\bm{x}$ of the knot Floer homology, is determined by Alexander grading $A$ and Maslov grading $M$ of $\bm{x}$ in the knot Floer complex : $gr(\bm{x}_0) = \lambda^{M(\bm{x}) - 2A(\bm{x})} (gr(\rho_{23}))^{-A(\bm{x})}$
\end{itemize}
\end{theorem}

\noindent Next we discuss briefly about the group $G$, of which $\lambda$ is an element, and the grading $gr$ that is associated with the
bordered Floer chain complex $\widehat{CFD}(X_K)$. 
     
 \end{subsection}

 \begin{subsection}{Grading in bordered Floer homology} 
 Here we recall the grading scheme in bordered Floer homology following \cite[Chapter 10]{lot08}.
The grading for elements of a bordered Floer complex, denoted by $gr$, takes values in a non-commutative group $G(\mathcal{Z})$, whose elements are triples of the form $(m;i,j)$ where $m,i,j \in \frac{1}{2}\mathbb{Z}, i+j \in \mathbb{Z} $, where the half integer $m$ is the Maslov component, the pair $(i,j)$ denotes the $spin^c$-component. We will also be interested in $\Tilde{G} = G(\mathcal{Z}) \times \mathbb{Z}$, where the last component reflects the $U$ grading. The group law is defined by : 
\begin{align*}
    (m_1;i_1,j_1;n_1) \cdot (m_2;i_2,j_2;n_2) = (m_1 + m_2 + (i_1j_2 -i_2j_1);i_1+i_2,j_1+j_2; n_1+n_2)
\end{align*}
$G(\mathcal{Z})$ has these grading on Reeb elements : 
\begin{align*}
    gr(\rho_1) &=  (-\frac{1}{2};\frac{1}{2},-\frac{1}{2}) \\
    gr(\rho_2) &= (-\frac{1}{2}; \frac{1}{2}, \frac{1}{2}) \\
    gr(\rho_3) &= (-\frac{1}{2}; -\frac{1}{2}, \frac{1}{2})
\end{align*}
along with this rule that $gr(\rho_I \rho_J) = gr(\rho_I)gr(\rho_J)$ and $gr(\rho_{IJ}) = \lambda gr(\rho_J) gr(\rho_I)$, (where $IJ \in \{12,23,123\}$ ) where $\lambda = (1;0,0) \in G(\mathbb{Z})$.

Recall that the set of all periodic domain is isomorphic to $H_2(Y,\partial Y) \cong \mathbb{Z}$. 

If we call the image of the generator of this group in $\Tilde{G}$ by $g$, Then for a multiplication map $m_{k+1}(x,\rho_{i_1},\cdots,\rho_{i_k}) = y, \Delta_U = m$ in $\cfahat(Y,K)$ ($\Delta_U$ denotes the $i-$filtration shift), we have 
\begin{align*}
    gr(y) = \lambda^{k-1} gr(x)gr(\rho_{i_1})\cdots gr(\rho_{i_k})\cdot u^{-m} \in \langle g \rangle \symbol{92} \Tilde{G}
\end{align*} 
where $u = (0;0,0;-1) \in \Tilde{G}$ (Observe that both $\lambda, u$ are in the centralizer of $\Tilde{G}$). 

If we call the image of the generator of periodic domains in $\Tilde{G}$, $h$, then for $D_{I}$ being a coefficient map from $x$ to $y$ in $\cfdhat(Y,r)$ we have \begin{align*}
    gr(y) = \lambda^{-1} gr(\rho_I)^{-1} gr(x) \in \Tilde{G} / \langle h \rangle
\end{align*}
The box tensor product between $\cfdhat$ and $\cfahat$ of two manifolds with torus boundary is then graded by $ \langle g \rangle / \Tilde{G} \symbol{92} \langle h \rangle$. Every element in this double-coset space is uniquely equivalent to an element of the form $\lambda^a u^b$, for some $a,b \in \mathbb{Z}$ i.e. the grading of that element takes form $(a;0,0;-b)$.

We recall that the \emph{$z$-normalized Maslov grading $N$}, defined by Lipshitz-Ozsv\'ath-Thurston in \cite[Section 11.3]{lot08} can be realized in this way: $N = M- 2A$ (\cite[Equation 11.13]{lot08}) and $N=0$ for the generator of $H_{*}(gCFK^{-}(K)/ U=1) \cong \mathbb{Z}$ (\cite[Equation 11.15]{lot08}), where $M$ denotes the Maslov grading and $A$ denotes the Alexander grading of an element in $gCFK^{-}(K)$, the associated graded object filtered by the Alexander grading $j$. The first coordinate $a$ from the discussion above, is the value of $N$, upto an additive constant. The last coordinate $b$ from above, is the Alexander grading, upto an additive constant. See \cite[11.9]{lot08} for an example showing how one can find the exact Maslov and Alexander grading, using Poincar\'e polynomial, using the fact that weighted Euler characteristics of knot Floer homology is the Poincar\'e polynomial of the knot.

Now, if we have two elements in knot Floer homology of a thin knot, from the splicing formula, whose grading reduce to $(a_1;0,0;b_1)$ and $(a_2;0,0;b_2)$, then $N_1 = a_1 + c_0 = M_1 - 2A_1$, $N_2 = a_2 + c_0 = M_2 - 2A_2$ and $A_1 = b_1 + d_0$, $A_2 = b_2 + d_0$ (where $c_0$ and $d_0$ are some additive constants for $N$ and $A$, respectively).  For thin knots $M_1 - M_2$ should be equal $A_1 - A_2$, which implies that:
\begin{equation}
\label{eq}
    a_1 - a_2 = b_2 - b_1
\end{equation}
 Next we carry out the grading calculations and find two elements in the knot Floer homology using the splicing formula, for which Equation \ref{eq} fails to hold.
     
 \end{subsection}
%%%%%%%%%%%%%%%%%%
\section{Proof of Theorem~\ref{thm:main}}
\begin{subsection}{Convention}
Throughout the proof, we assume that for the pattern $T_{p,q} \subset \partial(D^2 \times S^1)$, $p$ and $q$ are relatively prime and that $q >0$.
Additionally, since $K_{-p,q} = -((-K)_{p,q})$, the Floer thinness of $K_{p,q}$ implies the Floer thinness of $K_{-p,q}$, and vice-versa. Here by $-L$ we denote the mirror of $L$. Thus without loss of generality, we can also assume that $p > q > 0 $ and the framing of $K$ is arbitrary.  If $q > p > 0$, let $q = mp+i, m > 0, p > i \geq 1$. By choosing $m$ framing of $K$ as the companion and $T_{p,i}$ as the pattern, we can reduce this case to the previous one.

For technical reasons, we first prove the main statement for the case $q \neq 1$. In the last subsection, we prove the statement for the case when $q=1$.
\end{subsection}

 \begin{subsection}{Doubly Pointed Heegaard diagram of $T_{p,q} \subset S^1 \times S^1$}
  Petkova in \cite{petkova13} and Hom in \cite{hom11} used a bordered Heegaard diagram of $(p,1)$-pattern knot in the solid torus, as in Figure \ref{fig4}, and considered the lifts of the $\bm{\alpha}$ arcs and the $\bm{\beta}$ circle in the universal cover of the genus one Heegaard surface i.e. Euclidean plane, to count the \textit{Whitney} disks between the generators. We follow a similar strategy to find $\cfahat(D^2 \times S^1, T_{p,q})$.

 \begin{figure}
    \centering
    \includegraphics[width=8cm]{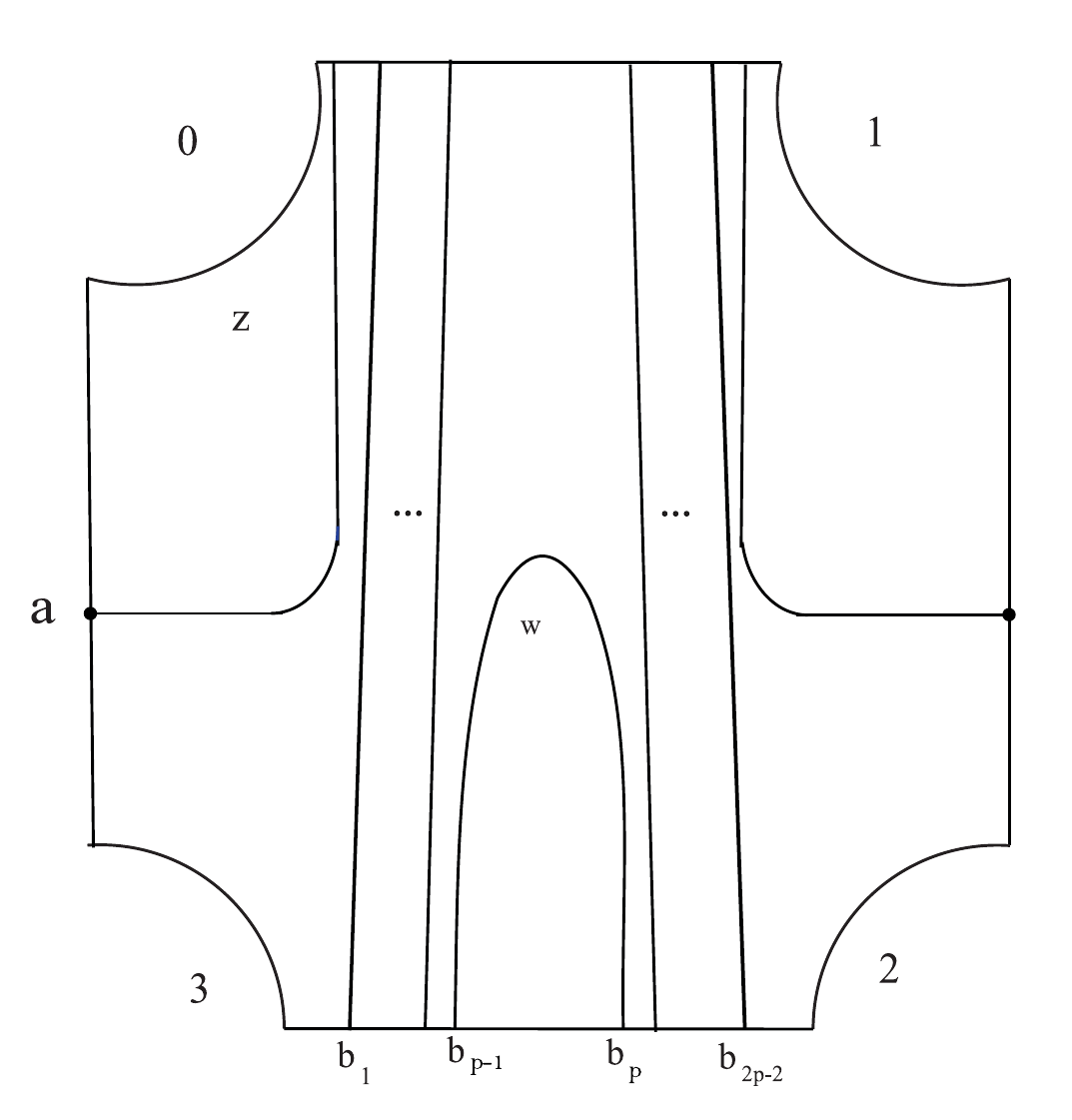}
    \caption{A doubly pointed bordered Heegaard diagram of $(p,1)$ torus knot in $D^2 \times S^1$}
    \label{fig4}
\end{figure} 

%From the picture, one considers the indicated generator $a$. Now the observation that both $\xi_2$ and $a$ live in the $\iota_0$ part of the $\mathbb{F}$-vector spaces $\cfdhat(X_K,n)$ and $\cfahat(D^2\times S^1, T_{p,1})$, respectively, allows to look at $a \otimes \xi_2$, which Hom does in \cite{hom11} to find the $\tau$ of the cables $K_{p,pn+1}$. Hom shows that $a \otimes \xi_2$ \textit{survives} in homology and thus an element of $\hfkhat(K_{p,pn+1})$. To do this, Hom finds all the generators and the $\mathcal{A}_{\infty}$- relations in $\cfahat(D^2\times S^1,T_{p,1})$, from the diagram. See \cite[Section 4]{petkova13}, \cite[Section 4.1]{hom11}.

 In \cite[Theorem 3.5]{ording06}, Ording describes an algorithm to find a genus one doubly pointed Heegaard diagram for any $(1,1)$ knot. For this paper, we are specifically interested in doubly pointed Heegaard diagrams of torus knots on a torus. To start, one uses a standard form of the knot on the torus. Next, draw the $\beta$ curve on the torus, isotopic to the standard longitude of the torus, ensuring that it avoids the $t_{\beta}$. Here, $t_{\beta}$ represents the part of the knot in the $\beta$-handle body, i.e., the portion obtained by joining $w$ to $z$ without crossing $\beta$. Refer to \cite[Figure 3.11]{ording06} for a step-by-step diagram obtained after applying the algorithm for $T_{5,3}$.

%One way to view Ording's algorithm is that one starts with a copy of the standard longitude and meridian of a torus and starts deforming the longitude in each step, keeping it isotopic to the longitude but missing the standard form of the $(1,1)$ knot. Now, one can observe that at each step when the new longitude hits the meridian, two new generators in the knot Floer complex of the knot, are born. The generator represented by the intersection point $a$ is the one in knot Floer homology of $T_{p,q}$ in $S^3$, which is the generator of the vertical homology and the unique intersection point one starts with at the beginning of the algorithm.  

After obtaining the $\beta$ circle for the knot diagram of $T_{p,q}$ following Ording's method, we remove a neighborhood of the vertices of the fundamental domain of the torus. The horizontal and vertical boundary components are then taken as the $\alpha$-arcs, where one initially represented a longitude and the other a meridian of the torus before the removal. Now renaming $w$ in Ording's picture to $z$ and introducing another base point $w$ at the bottom of the picture we obtain a genus one doubly pointed bordered diagram for $T_{p,q} \subset \partial(D^2 \times S^1)$ in the solid torus. It is worth noting that if we disregard the basepoint $w$ and perform an isotopy, we arrive at the standard genus one Heegaard diagram of $S^3$, with $\alpha, \beta$ curves being a longitude and a meridian of $S^3$, respectively. This implies that the bordered diagram we considered is indeed a bordered diagram of $D^2 \times S^1$. The fact that Ording's algorithm produces a doubly pointed Heegaard diagram of $T_{p,q} \subset S^3$ after plugging $w$ basepoint, indicates that we have successfully constructed a doubly pointed bordered diagram of $(D^2 \times S^1, T_{p,q})$ such that $T_{p,q} \subset \partial(D^2 \times S^1)$.

To obtain the standard $(p,q)$ torus knot, join $w$ to $z$ in the complement of the $\beta$ curve, and then connecting from $z$ to $w$ in the complement of the meridian is required. Notably, this operation also involves a reflecting Ording's picture with respect to the the horizontal $\alpha$ arc. For the $(5,3)$ torus knot, this flipped configuration would resemble Figure \ref{fig7} on the torus and Figure \ref{fig8} in the lifted setting. We denote the doubly pointed Heegaard diagram for $T_{p,q} \subset \partial(D^2 \times S^1)$ as $\mathcal{H}(p,q)$.

Below are two pictorial examples of doubly pointed bordered Heegaard diagrams of $\mathcal{H}_{p,q}$ on the fundamental domain of a torus and then in the lift, where $p = 3, q=2$ in the first case (Figure \ref{fig5}, \ref{fig6}) and $ p =5, q = 3$ (Figure \ref{fig7}, \ref{fig8}) in the second case.
\begin{figure}
    \centering
    \includegraphics[width=7.5cm]{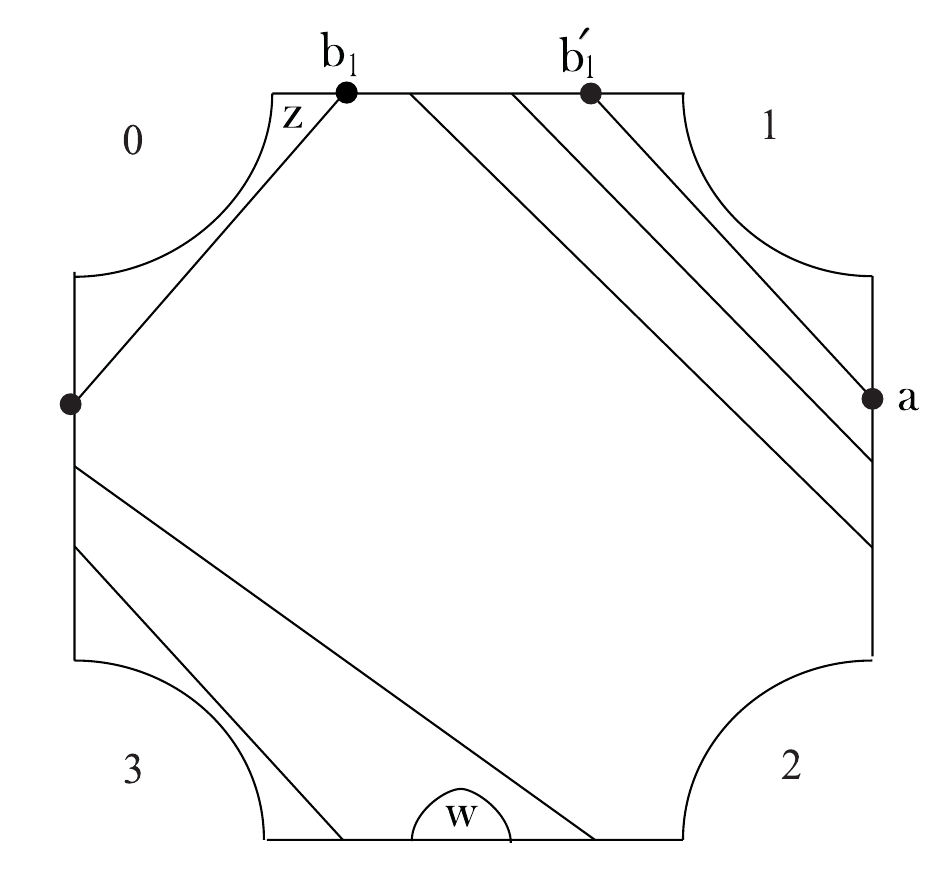}
    \caption{A genus one bordered Heegaard diagram $\mathcal{H}(3,2)$ of $T_{3,2}$}
    \label{fig5}
\end{figure} 

\begin{figure}
    \centering
    \includegraphics[width=12cm]{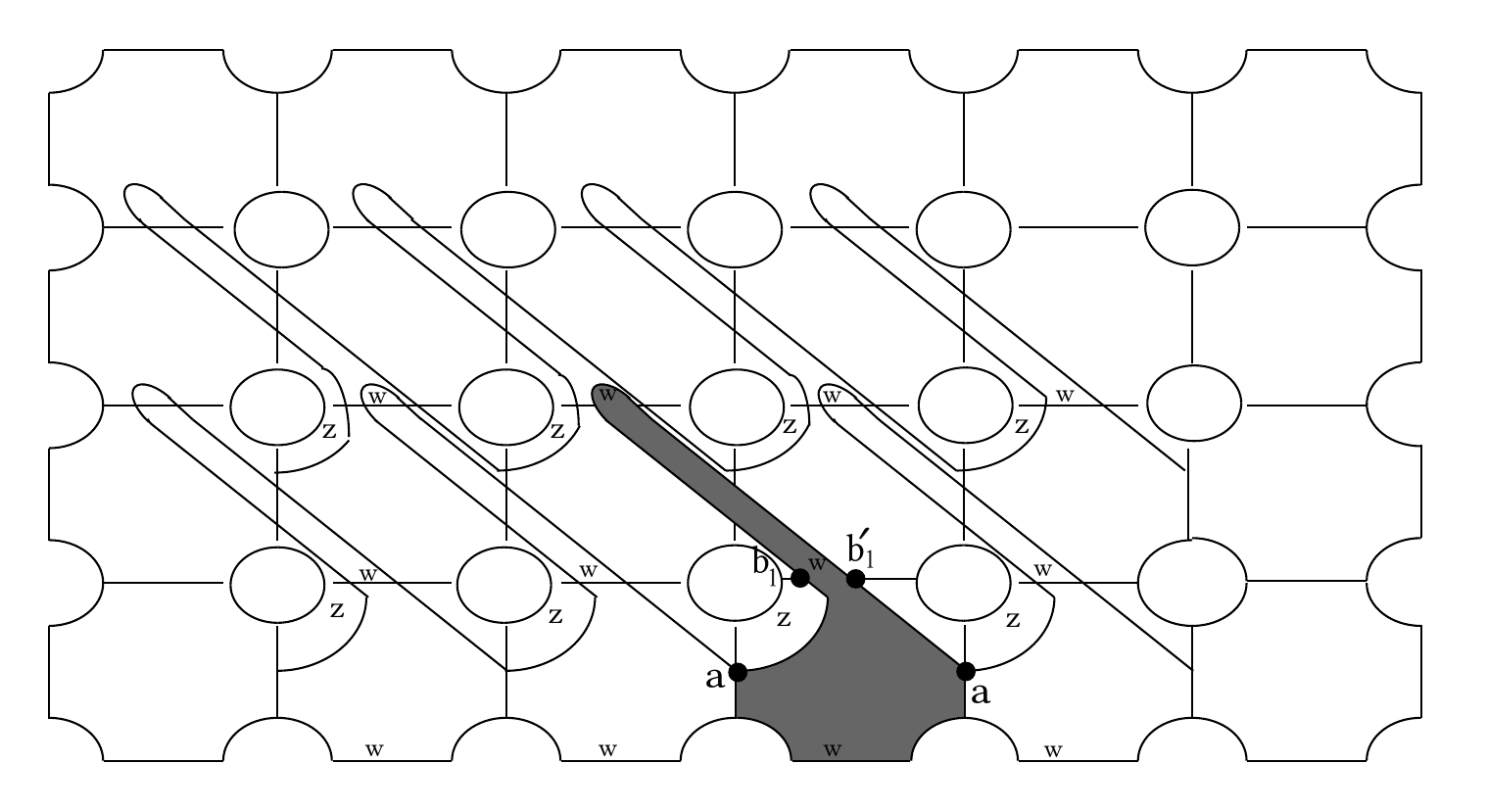}
    \caption{ A part of the lifted bordered Heegaard diagram $\mathcal{H}(3,2)$ of $T_{3,2}$  }
    \label{fig6}
\end{figure}

  Within this context, we designate the sole intersection point between the $\beta$ curve and the $\alpha$ arcs, obtained after isotoping the $\beta$ curve over the $w$-basepoints, as point $a$. It is worth noting that $a$ resides in the $\alpha^a_1$ arc.

If we enumerate the intersection points of the $\beta$ curve with the $\alpha^a_2$ arc starting from the left, we label the first intersection point as $b_1$ and the last one as $b'_1$. Refer to Figures \ref{fig5} and \ref{fig7} for examples when $q \neq 1$ and also Figure \ref{fig4} when $q = 1$.

  \begin{remark}
  \label{remark}
      We will focus on the mentioned intersection points $a, b_1, b'_1$ in $\mathcal{H}(p,q)$ and the local behaviour of $\mathcal{H}(p,q)$ around those. Notably, for $p>1$:  
      \begin{itemize}
      \item The doubly pointed Heegaard diagram $(\Sigma, \alpha, \beta, w, z)$ is \textit{reduced}, meaning there is no Whitney disk connecting two intersection points of $\widehat{CFK}(S^3, T_{p,q})$ without any basepoints. 
          \item The only Whitney disc connecting the intersection point $a$ without any basepoints is the one connecting $a, b'_1$ with $\rho_1$ on its boundary. The same is true for $b'_1$.
          \item There are no Whitney disc connecting $b_1$ without basepoint, which has $\rho_2$ on its boundary.
      \end{itemize}

\noindent Note that the first remark follows since we can pull the $\alpha, \beta$ curves tight on the Heegaard torus (or the lifted curves on the universal cover) i.e. isotope the attaching curves, to avoid empty bigons. Also refer to \cite[Reduction Lemma]{hw18} for an algebraic explanation in the general case. Both the second and the third remark follows from the nature of $\beta$ curve as Ording's algorithm and the descriptions of the mentioned intersection points. These observations contribute to the understanding of the local characteristics of $\mathcal{H}(p,q)$ in the specified conditions.
  \end{remark}

To find the element that survives in the homology, Petkova in \cite{petkova13} and Hom in \cite{hom11} find all the $\mathcal{A}_{\infty}$-relations from a bordered diagram of $(p,1)$ pattern in $D^2 \times S^1$. For our case, we will only be interested in three mentioned generators in $\widehat{CFD}(D^2 \times S^1, T_{p,q})$ and $\mathcal{A}_{\infty}$-relations concerning them. For any $(p,q)$, it is not always easy to find all the disks and thus all the $\mathcal{A}_{\infty}$-relations. Instead we will be looking into a few specific relations with one eye to our goal, coming from the bordered Heegaard diagram $\mathcal{H}(p,q)$.

\begin{lemma} \label{lemma9}
In $\cfahat(D^2\times S^1, T_{p,q})$, we have the following $\mathcal{A}_{\infty}$-relations :
\begin{align*}
    m_3(a, \rho_3,\rho_2) = U^{n_w}\cdot a  \hspace{.3in} &i.e. \Delta_U = n_w \\ 
    m_4(a, \rho_3,\rho_2,\rho_1) = U \cdot b_1 \hspace{.3in} & i.e. \Delta_U = 1\\
    m_2(a, \rho_1) = b_1' \hspace{.3in} & i.e. \Delta_U = 0\\
   %m_2(a, \rho_{12}) = a'
    \end{align*}
    where $n_w$ is the number of $w$ basepoints in the primitive positive periodic domain of $\mathcal{H}(p,q)$, and by primitive we mean the generator of $\pi_2(a,a) \cong \mathbb{Z}.$
\end{lemma}
As mentioned before, $\Delta_U$ denotes the $i-$filtration shift, which is the number of times the associated domain cross the $w-$basepoint. Also note that the periodic domain mentioned here is a domain that does not contain any $z$ basepoints.

\begin{proof}
  For a given genus one doubly pointed bordered Heegaard diagram, we can look at the fundamental domain of the torus and find a periodic domain joining $a$, bounded by $\alpha$-arcs and the $\beta$ curve. Notice that since $a \in \iota_0(\widehat{CFA}(D^2 \times S^1, T_{p,q}))$, we get that the primitive domain connecting $a$ contains $\rho_2, \rho_3$ on its boundary. Thus we get the first $\mathcal{A}_{\infty}$-relation (See Figure \ref{fig6}, \ref{fig8} for reference).
  
Similarly, from the description of $b_1$, one can see that a domain exists which is bounded by $\alpha$-arcs to the right and $\beta$ curves to the left, joining $a$ and $b_1$, picking up $\rho_3,\rho_2,\rho_1$ in the process. Also, the $U$ power takes care of number of $w$ encountered inside the domain, which is 1. Combining these, we get the second $\mathcal{A}_{\infty}$-relation. 

The third relation follows from the description of $b_1'$ as well, where Whitney disks can be found bounding $a$ and $b_1'$, which picks up $\rho_1$ in the boundary.
   \end{proof}
   
Note that $b_1'$ in Figure \ref{fig4} is the intersection point $b_{2p-2}$.

\begin{figure} [h!]
    \centering
    \includegraphics[width=7cm]{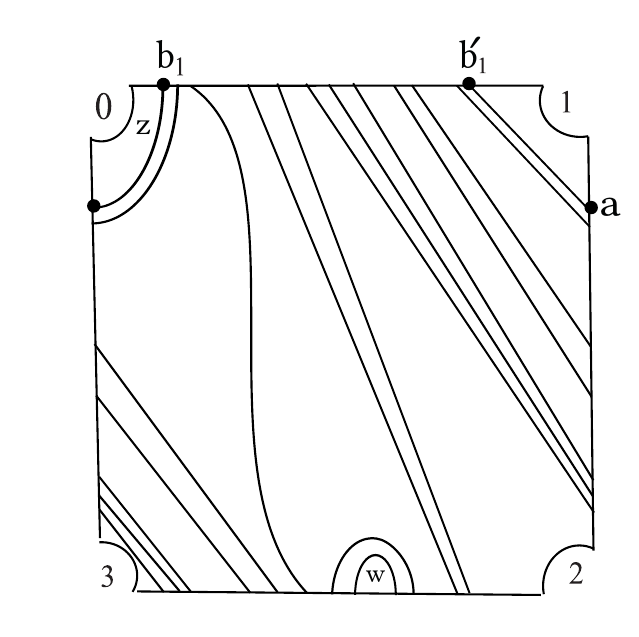}
    \caption {A genus one bordered Heegaard diagram $\mathcal{H}(5,3)$ of $T_{5,3}$}
    \label{fig7}
\end{figure}

\begin{figure}[h!]
    \centering
    \includegraphics[width=12cm]{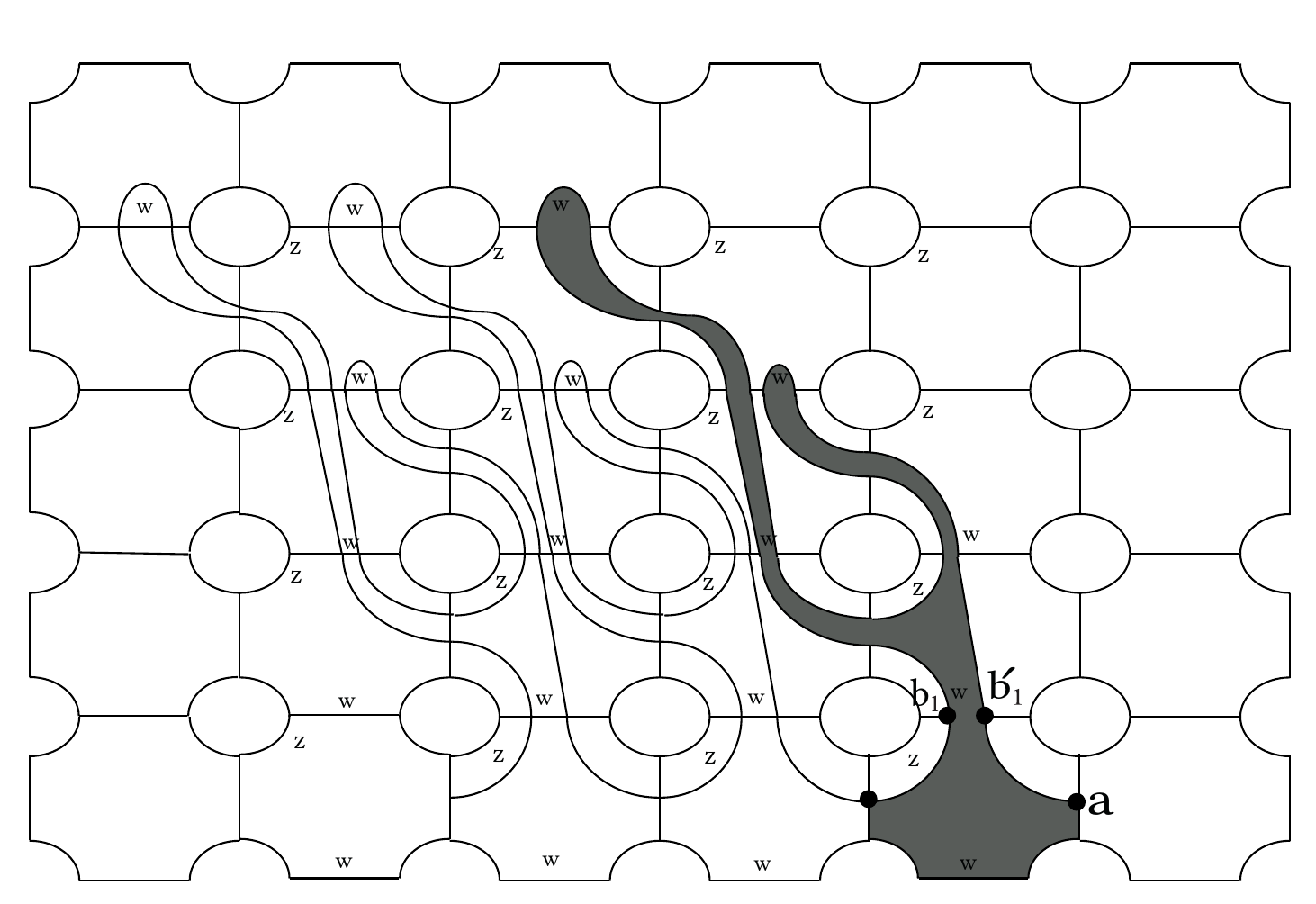}
    \caption{A part of the lifted $\mathcal{H}(5,3)$ of $T_{5,3}$}
    \label{fig8}
\end{figure}

\begin{lemma}
\label{lemma10}

 Multiplicity of $w$ in the primitive periodic domain of $\mathcal{H}(p,q)$ is $vx + 1$, where $x,y,u,v$ are unique positive integers such that $p = x+y, q = u+v $ such that $vx-uy = 1$. 
\end{lemma}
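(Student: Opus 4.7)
The plan is to compute the multiplicity of $w$ in the primitive positive periodic domain of $\mathcal{H}(p,q)$ by working in the universal cover of the Heegaard torus, where Ording's algorithm gives a completely explicit description of the $\beta$ curve.

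First I would lift $\mathcal{H}(p,q)$ to the universal cover $\mathbb{R}^2$ and use \cite[Theorem 3.5]{ord} to pin down the piecewise-linear shape of $\beta$. In this lift, the $(p,q)$ torus knot becomes a family of parallel lines of slope $q/p$, and the cat's-cradle $\beta$ curve consists of linear segments in exactly two primitive integer directions. A Stern--Brocot (Farey) argument shows that these two directions are $(x,u)$ and $(y,v)$, the Farey neighbors of $(p,q)$ characterized by $p = x+y$, $q = u+v$, and $vx - uy = 1$. This both justifies the existence and uniqueness of the integers $x,y,u,v$ claimed in the lemma and gives a concrete handle on the shape of $\beta$.

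Next, I would identify the primitive positive periodic domain $P$ of $\mathcal{H}(p,q)$. Since $\pi_2(a,a) \cong H_2(D^2 \times S^1, \partial(D^2 \times S^1)) \cong \mathbb{Z}$, $P$ is unique up to sign, and its multiplicity at each region of $\Sigma \setminus (\bm{\alpha} \cup \bm{\beta})$ is determined by the requirement that the multiplicities along each $\alpha$-arc and along $\beta$ be consistent. Using the piecewise-linear description of $\beta$ from the previous step, I would show that the multiplicity at the region containing $w$ equals the winding number of the lifted $\beta$ curve around a lift of $w$, plus a base contribution of $1$ coming from the fundamental square of the torus action.

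Finally, I would compute this winding number. A direct enumeration of the lattice points inside the relevant polygon, or equivalently Pick's theorem applied to the parallelogram with vertices $(0,0), (x,u), (p,q), (y,v)$ of area $vx - uy = 1$, yields the count $vx$; combined with the base contribution this gives $n_w = vx + 1$. The main obstacle I anticipate is the previous step: verifying that Ording's $\beta$ curve does not double back across the region containing $w$ in a way that would cancel parts of the periodic domain or introduce sign ambiguities. I would handle this by induction on the depth of the fraction $q/p$ in the Stern--Brocot tree, with the diagrams $\mathcal{H}(3,2)$ and $\mathcal{H}(5,3)$ of Figures~5 and~7 as base cases.
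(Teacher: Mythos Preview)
Your approach is genuinely different from the paper's, and the first two steps---the Farey/Stern--Brocot description of the lifted $\beta$ curve and the identification of the primitive periodic domain---are reasonable in outline. The paper, by contrast, avoids any direct geometric analysis of the periodic domain: it closes up the $\alpha$-arcs to circles to obtain a doubly pointed diagram for $T_{p,q}\subset S^3$, notes that $|\tilde\alpha\cap\tilde\beta|=\dim_{\mathbb{F}}\widehat{HFK}(T_{p,q})$, invokes Song's formula that $\Delta_{T_{p,q}}$ has exactly $2vx-1$ nonzero coefficients, uses the L-space property of torus knots to conclude $\dim_{\mathbb{F}}\widehat{HFK}(T_{p,q})=2vx-1$, and then argues that every pair of generators beyond the distinguished one forces one $w$ in the domain, giving $\tfrac{2vx-2}{2}+2=vx+1$. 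So the paper's route is essentially algebraic, resting on external input about Alexander polynomials and L-space knots, whereas yours is purely combinatorial.

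However, your final step has a genuine gap. You assert that Pick's theorem applied to the parallelogram with vertices $(0,0),(x,u),(p,q),(y,v)$, of area $vx-uy=1$, ``yields the count $vx$.'' But a lattice parallelogram of area $1$ has \emph{no} interior lattice points: Pick gives $1=I+B/2-1$, and since $(x,u)$ and $(y,v)$ are primitive the only boundary lattice points are the four vertices, so $B=4$ and $I=0$. Nothing about this unit cell produces the number $vx$ when $vx\ge 2$. The lifted primitive periodic domain is not this parallelogram; it is a much larger region whose boundary runs once around the entire $\beta$ curve together with $\alpha$-arcs and the Reeb chords $\rho_3,\rho_2$, and you have not described its shape or computed its area. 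To make your strategy work you would need to determine how many $(x,u)$- and $(y,v)$-segments constitute one full period of $\tilde\beta$, assemble the boundary of the lifted domain, and then count the lifts of $w$ it encloses (equivalently, compute its area). That is a real combinatorial computation which your proposal does not carry out; the Pick-on-a-unit-cell shortcut as written is not a valid substitute.
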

\begin{proof}
We consider the doubly pointed Heegaard diagram $(\Sigma, \alpha,\beta, z, w)$ for a torus knot $T_{p,q}$ using Ording's cat's cradle description from \cite{ording06}. As mentioned earlier, using the algorithm one can obtain doubly pointed bordered Heegaard diagram $\mathcal{H}(p,q)$ of $T_{p,q} \subset D^2 \times S^1$. We consider the lifts $\Tilde{\alpha}, \Tilde{\beta}$ of $\alpha, \beta$ circles, respectively, in the universal covering of a torus i.e. in $\mathbb{R}^2$. Note that without loss of generality we can assume that there is no bigon connecting two intersection points between $\Tilde{\alpha}$ and $\Tilde{\beta}$ with no $z$ and $w$ base points, since we have argued that the chain complex can be assumed to be reduced i.e. devoid of such bigons.

Note that torus knots are $L$-space knots since positive surgery along torus knots with certain coefficient produces lens space, by \cite{moser71}. Ozsv\' ath-Szab\'o (\cite{oslens05}) showed that the knot Floer complex of $L$-space knots forms a `staircase' complex and hence $dim_{\mathbb{F}}(\hfkhat(T_{p,q}, j))$ is at most 1, for all $j$. Since the weighted Euler characteristics of knot Floer homology is the Alexander polynomial of the knot, the above discussion implies that $dim_{\mathbb{F}}(\hfkhat(T_{p,q}))$ is equal the number of non-zero terms in $\Delta_{T_{p,q}}$, which is $2vx - 1$ by \cite[Corollary 2.6]{song17}, where $x,y,u,v$ are unique positive integers such that $p = x+y, q = u+v $ such that $vx-uy = 1$.

Observe that in the lifted picture, whenever $\Tilde{\beta}$ crosses a $\Tilde{\alpha}$, we get two generators in $\hfkhat(T_{p,q})$ and the bigon connecting them has a $w$ in it. There are $2vx - 1$ intersection points between $\Tilde{\beta}$ and $\Tilde{\alpha}$. Barring the intersection $a$ (which is the generator of $\widehat{HF}(S^3)$), for every two such, there is a lift of the basepoint $w$ which is contained in the bigon representing the primitive periodic domain. Hence, the number of $w$'s in that domain is equal to $ \frac{2vx-1-1}{2} + 2 = vx+1$, where other then the periodic domain, one $w$ multiplicity comes from a $w$ basepoint lying inside the fundamental domain of torus, bounded by the boundary of the periodic domain. The other $w$ basepoint stays inside this periodic domain according to the algorithm of Ording in \cite{ording06}. See Figure \ref{fig9} where the initial and the end $\beta$ strands of the boundary of this periodic domain are drawn. 
\end{proof}

%\begin{lemma}
%In $\mathcal{H}(p,q)$, there is no Whitney disk connecting $b_1$ that does not contain the basepoint $w$. In other words, every $\mathcal{A}_{\infty}$ relation in $CFA^{-}(S^1 \times D^2, T_{p,q})$, concerning $b_1$, has a \emph{non-zero} $U$ coefficients to it, when $ p > q$.
%\end{lemma}
 
 %\begin{proof}

  %From the algorithm, observe that to prove the claim of the Lemma one has to show that the second strand of $\beta$ curve, i.e. the strand that starts from $b_1$, \emph{should} lie to the left of $w$. If that happens, then we can see that since no disks are `allowed' to contain $z$ in it, the only way any other disk connecting $b_1$ can exist if and only if it contains a $w$ inside it and hence the claim. 
  
%Now to prove the claim, we observe that as per Ording's algorithm in \cite{ording06}, $p(t_{\beta})$ misses the standard form of the torus knot on the fundamental domain and also it is isotopic to the standard longitude of the torus. From the Figure \ref{fig9} it is easy to see that $\beta$ will cross $w$ from its left right after crossing $b_1$, since each strand of the standard form of the knot has slope greater than 1 (as $p > q$) and thus the first strand of the knot will intersect the horizontal boundary of the fundamental domain to the left of $w$. 
%\end{proof}

 \begin{figure}[h!]
    \centering
    \includegraphics[width=13cm]{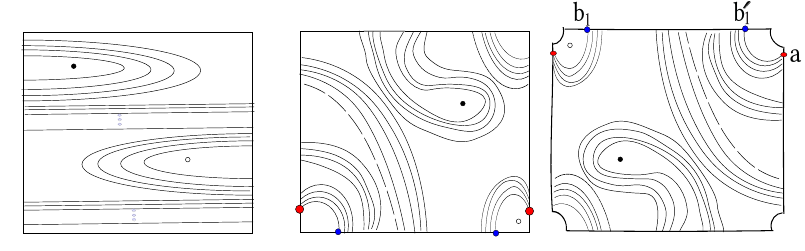}
    \caption {The left most 
    figure shows a \emph{normal} form of $T_{p,q}$ on the fundamental domain where the $i$-th strand on the left vertical boundary, counting from top, gets identified with $p+i$ mod $(q-1)$-th strand on the right vertical boundary.  Second 
    picture shows a schematic \emph{normal} form of a doubly pointed knot diagram for $T_{p,q}$ on the fundamental domain. The right most picture shows the eventual doubly pointed `bordered' fundamental domain, on which one can apply Ording's algorithm to find $\mathcal{H}(p,q)$ i.e. the doubly pointed bordered diagram of $T_{p.q} \subset D^2 \times S^1$. The shaded dot represents $w$ basepoint, the un-shaded dot represents $z$ basepoint. Note that the middle and the right most pictures only shows the relevant portions of the domain }
    \label{fig9}
\end{figure}

 \end{subsection}

%For this specific subcase, one of the desired generators is $a \otimes \xi_2$, while the other one will be $b_1 \otimes \mu_1$, where $\mu_1$ is an element in the unstable chain. 

Observe that to make sure that an element, say $\gamma$ in $\cfkhat$, is a non-zero element in knot Floer homology, it is enough to check that all the elements in $\partial(\gamma)$ contains a \emph{non-zero} $U$ power i.e. $\partial^{vert}(\gamma)=0$ \emph{and} there is no element $\alpha \in \cfkhat$ such that $\partial^{vert}(\alpha)=\gamma$.

Let $K$ be our companion knot in $S^3$. For the case $q \neq 1$, we are going to find the desired generators, separating our search in three cases : when $\varepsilon(K) = 1$, when $\varepsilon(K) =0$ and when $\varepsilon(K)=-1$. 

 We recall the definition of $\varepsilon$, a knot concordance invariant, defined by Hom in \cite{hom14}. To define $\varepsilon$, it is necessary to recall another knot concordance invariant $\nu$, defined by Ozsv\'ath-Szab\'o.  Recall that 
 \[\nu(K)=min \{ s| p_s : C\{ max \{i, j-s\} = 0\}  \rightarrow C\{i=0\} \text{ induces a surjection in homology}\} \] where $p_s$ is the projection map onto the $i$ coordinate. Notice that $\nu(K) = \tau(K)$ or $\tau(K)+1$, see \cite{os4genus03}. Then
 \begin{align*}
     \varepsilon(K) = 
 \begin{cases}
 -1 & \quad \text{if $\nu(K) = \tau(K) + 1 $} \\
 0 & \quad \text{if $\nu(K) = \tau(K)$ and $\nu(-K) = \tau(-K)$ } \\
 1 & \quad \text{if $\nu(-K) = \tau(-K) + 1 $}
     \end{cases}
 \end{align*} 

\vspace{.1in}

\noindent Note that for the proof from now on, we will make use the notations for various chain elements introduced in Theorem \ref{lotmain}.

\begin{subsection}{\underline{\textbf{Case 1.1: $\varepsilon(K)=1, t = 2 \tau(K) - r \neq 0$}}} 
\label{case1}

For $\varepsilon(K) = 1$, we use \cite[Lemma 3.2]{hom11} to find $\{ \xi_i \}$, a \emph{vertically simplified} $\mathbb{F}[U]$ basis of $\cfkminus$, with the following properties, after possible renaming, 
 \begin{itemize}
    \item $U^{k}\cdot \xi_2$ is the generator of the homology of $C\{j=0\}$, for some $k$. 
    \item there exists $\xi_1$ such that $\partial^{vert} \xi_1 = \xi_2.$ 
    \item $\xi_0$ is the generator of the homology of $C\{i=0\}.$
 \end{itemize}

\begin{lemma}
\label{lemma} 
When $\varepsilon(K) = 1$ and $t \neq 0$, $a \otimes \xi_2$ defines a non-zero generator in the homology.  
\end{lemma}
\begin{proof}
  First we look at the description of the generators of $\cfdhat(X_K,r)$ and we observe the corresponding differentials with coefficients to and from  $\xi_2$ in it. The immediate incoming and outgoing arrows to and from $\xi_2$ consist of coefficients $D_{123},D_{23}, D_3,D_2$. Note that since $\xi_2$ is the generator of the horizontal homology, we would only have to consider the vertical and the unstable chain of $\cfdhat(X_K,r)$.
  
To start with, there is no $\mathcal{A}_{\infty}$-relation such as $m_k(a,\rho_{123},\cdots) = c$, or $m_k(d,\cdots,\rho_{123}) = a$ for some $c,d \in CFD^{-}(D^2\times S^1, T_{p,q})$. The former one is since for any Whitney disk without the base points connecting $a$, $\Tilde{\beta}$ always has negative slope in $\mathcal{H}(p,q)$. The latter one since end of $\rho_{123}$ and $a$ lie in distinct idempotents. As per the algorithm, $\beta$ always runs along the standard form of the knot and only changes its direction around $w$. One can see that no such Whitney disks can exist which has the relation $\rho_{123}$ precedes or succeeds $a$ (in fact only $\rho_1$, $\rho_2$ or $\rho_3$ precedes or succeeds $a$ in a relation).

%Also, there is no $\mathcal{A}_{\infty}$-relation such as $m_k(c,\rho_{I},\cdots) = a$ where $c \neq a$ i.e. there is no Whitney disk which starts from $a$ and is bounded by $\Tilde{\alpha}$'s to the left and $\Tilde{\beta}$'s to the right and ends at $c$ (that does not include $z$), where $c \neq a$.

Also, a $\mathcal{A}_{\infty}$-relation $m_{k+2}(a,\rho_3,\underbrace{\rho_{23},\rho_{23},\cdots,\rho_{23}}_{k}) = c$ , for some $c$ is not possible, since a Whitney disk in $\mathcal{H}(p,q)$, starting from $a$ with $\alpha$'s to the right and $\beta$ to the left and having $\rho_{23}$ in it, should contain $\rho_2$ or $\rho_1$ as well.

In a similar way, an relation of the form $m_{k+2}(c, \underbrace{\rho_{23}, \cdots, \rho_{23}}_{k}, \rho_2) = a$ is not possible as a Whintey disk connecting $a$ with only some number of $\rho_{23}$ and $\rho_2$ on its boundary would contain non-zero number of $w$ basepoints.

%Also observe that there are no relations such as $m_2(a,\rho_3) = c$ since any such Whitney disks would require the $\beta$ curve starting from $a$ to traverse downward , which is not the case as per the algorithm.

Combining these observations completes the proof. 
\end{proof}

%For the case in hand i.e. when $\varepsilon(K) = 1, t\neq 0$, we will produce two non-zero generator in the knot Floer homology of the cable and calculate their grading. Since we are just interested in showing that the difference in their Maslov grading is not equal to the difference in their Alexander grading, we will only focus on the relative grading calculation and not the absolute one.

Let $\xi_{2i}$ be some element in the \emph{vertically simplified} basis of $\cfkminus(K)$ such that
\[
        \xi_{2i-1} \xrightarrow{D_1} \kappa_1^i \xleftarrow{D_{23}} \cdots \xleftarrow{D_{23}} \kappa_{\ell_i}^i \xleftarrow{D_{123}} \xi_{2i} 
    \]

\noindent and let $A(\xi_{2i}) = d, M(\xi_{2i}) = m$ (by this we mean that A is the Alexander grading and M is the Maslov grading of the element of the knot Floer homology of $K$, that represents the element $\xi_{2i}$). Then, using \cite[Theorem A.11]{lot08}, we get that $gr(\xi_{2i}) = \lambda^{m-2d} \cdot gr(\rho_{23})^{-d} \in G(\mathcal{Z})/  \lambda^{-1} gr(\rho_{23})^{-r}gr(\rho_{12})^{-1}$.

 Next, we look at the \emph{unstable chain} of $\iota_1$ part of $\cfdhat(X_K,r)$, mentioned in Theorem \ref{lotmain}. If $t < 0$, then $ \mu_1 = D_{123} \cdot \xi_0$. If $ t > 0$, then $\mu_1 = D_1 \cdot \xi_0$. We calculate the grading of $\mu_1$ for both cases.

Recall that Lemmas \ref{lemma9} and \ref{lemma10} implies that \[ m_3(a, \rho_3,\rho_2) = U^{vx+1} \cdot a \]

Which implies that the image of the group of periodic domain in $\tilde{G}$ is generated by 
\[u^{-(vx+1)} \cdot \lambda \cdot gr(\rho_3) \cdot gr(\rho_2) = u^{-(vx+1)}\cdot gr(\rho_{23})\]

Thus the grading set for $\cfahat$ is isomorphic to  $u^{-(vx+1)} \cdot gr(\rho_{23}) \symbol{92} \Tilde{G}$.
Also as mentioned above, the grading set for $\cfdhat(X_K,r)$ is  $\tilde{G} / \lambda^{-1}\cdot gr(\rho_{23})^{-r} \cdot gr(\rho_{12})^{-1}$, when the framing of the companion knot is $r$.

Then Lemma \ref{lemma9} implies
\begin{align*}
    gr(b_1) = \lambda \cdot u^{-1} \cdot gr(\rho_{23}) \cdot gr (\rho_1) \\
    \sim \lambda \cdot u^{-1} \cdot u^{vx+1} \cdot gr(\rho_1) \\
    = \lambda \cdot u^{vx} \cdot gr(\rho_1)
\end{align*} 

One can recover the equivalency by multiplying $u^{(vx+1)} \cdot gr(\rho_{23})^{-1}$ to $gr(b_1)$ from left, and the fact that $u$ lives in the commutator subgroup of $\tilde{G}$.

From the description of $\xi_2$ and using Theorem \ref{lotmain}, we get that \begin{align*}
    gr(\xi_2) = \lambda^{-2\tau_K + 2\tau_K} \cdot gr(\rho_{23})^{\tau_K} = gr(\rho_{23})^{\tau_K}
\end{align*} 
since the generator of homology of $C\{j=0\}$ has Alexander grading $-\tau_K$ and Maslov grading $-2\tau_K$ and $\xi_2$ is a representative of that.
Thus, since $gr(a) = (0;0,0;0)$, 
\begin{equation}
\label{eq3}
    gr(a \otimes \xi_2) = gr(\rho_{23})^{\tau_K} \sim u^{\tau_K(vx+1)} = (0;0,0;-\tau_K(vx+1))
\end{equation}

We find another non-zero generator in the homology and its grading to compare with $gr(a\otimes \xi_2)$.

We recall from previous discussion that $\varepsilon(K) = 1$ implies that 
\begin{itemize}
    \item $U^{k}\cdot \xi_2$ is the generator of the homology of $C\{j=0\}$, for some $k$ i.e. $U^k\cdot \xi_2 = \eta_0$. 
    \item there exists $\xi_1$ such that $\partial^{vert} \xi_1 = \xi_2.$ 
    \item $\xi_0$ is the generator of the homology of $C\{i=0\}$.
 \end{itemize}

% i.e. \[\xi_1 \xrightarrow{D_1} \cdots \xrightarrow{D_{23}} \kappa^1_{\ell_1} \xleftarrow{D_{123}} \xi_2 \]

 Also note since the homology of $CFK^{\infty}\{j=0\}$ is isomorphic to the homology of $CFK^{\infty}\{i=0\}$, which is $\hfkhat(K)$. Hence the second item from the implication of $\varepsilon(K) = 1$ implies for some $m$, $U^m \xi_0:= \eta_{2j} \in Im(\partial^{hor})$, for some $j$. 

 i.e. \[\cdots \xrightarrow{D_{23}/ D_3} \lambda^j_{k_j} \xrightarrow{D_2} \eta_{2j} \]

\begin{lemma} \label{epsilon1}
For $\varepsilon(K) = 1$, the following generators are non-trivial in $\hfkhat(S^3, K_{p,q})$:
\begin{itemize}
    \item $b_1 \otimes \mu_1$, for $t < 0, t>1$,
    \item $b_1' \otimes \lambda^j_{k_j}$, for any value of $t$.
    where $\lambda^j_{k_j}$ is as described above.
\end{itemize}

\end{lemma}

\begin{proof}

We consider the cases when $t<0, t>1$ together. Notice that there is no possible algebra relation of the form $m_2(b_1, \rho_2) = c$, $m_{k+1}(b_1, \underbrace{\rho_{23},\cdots}_k) = c$, $m_{k+2}(b_1, \underbrace{\rho_{23},\cdots, \rho_2}_k) = c$, $m_2(c, \rho_{123})=b_1$ or $m_{k+2}(c, \rho_3, \underbrace{\rho_{23}, \cdots}_{k>0}) = b_1$ for some $c$. This is because any Whitney disk, containing no $z, w-$ basepoints, connecting $b_1$ has a $\rho_3$ on its boundary. But if the boundary also contain $\rho_{23}$, then the disk would contain $w-$basepoints.

For the remaining case, we note that there are no algebra relations of the form $m_2(b'_1, \rho_2)= c$, $m_{j+1}(c, \underbrace{\rho_{23}, \cdots}_j) = b'_1$, or $m_{j+2}(c, \rho_3, \underbrace{\rho_{23}, \cdots}_j )=b'_1$, for some $c$. This is because any Whitney disk, containing no $z, w-$ basepoints, connecting $b'_1$ has a $\rho_1$ on its boundary. 
\end{proof}

%Observe that the equivalency is achieved by multiplying $u^{-(d+1)(vx+1)} \cdot gr(\rho_{23})^{d+1}$ to \\ $gr(b_1 \otimes \kappa_{\ell_i}^i)$ from left.

%Now, we can choose $m = -2\tau_K, d = -\tau_K$, by choosing $\ell =1$ i.e. 
%\begin{equation}
    %gr(b_1 \otimes \kappa_{\ell_i}^i) = (-1;0,0;-vx\tau_K)
%\end{equation}

%Getting back to proving that the difference between the relative Maslov grading of those two elements is not the same as the difference between their relative Alexander grading, we see that if $\tau_K \neq 1$, by comparing with Equation (3) we are done. Hence, we are done with the case when $t \neq 0, \varepsilon(K) = 1$ and $\tau_K \neq 1$.

Now we proceed with the grading calculation of the above generators. By Theorem \ref{lotmain}, $gr(\xi_0)$ equals to $\lambda^{-2\tau_K} \cdot gr(\rho_{23})^{-\tau_K}$.

For the case  $t < 0$,
\begin{align*}
    \mu_1 = D_{123}\cdot \xi_0 \Rightarrow gr(\mu_1) &= \lambda^{-1} \cdot gr(\rho_{123})^{-1}\cdot gr(\xi_0) \\
   &= \lambda^{-1} \cdot gr(\rho_{123})^{-1} \cdot \lambda^{-2\tau_K} \cdot gr(\rho_{23})^{-\tau_K} \\
   &= \lambda^{-1} \cdot \lambda^{-1} \cdot gr(\rho_1)^{-1} \cdot gr(\rho_{23})^{-1} \cdot \lambda^{-2\tau_K} \cdot gr(\rho_{23})^{-\tau_K} \\
   & = \lambda^{-2-2\tau_K} \cdot gr(\rho_1)^{-1} \cdot gr(\rho_{23})^{-\tau_K-1}
   \end{align*}
   
Then
\begin{align*}
    gr(b_1 \otimes \mu_1) &= gr(b_1) \cdot gr(\mu_1) \\
    &= \lambda \cdot u^{vx} \cdot gr(\rho_{1}) \cdot  \lambda^{-2-2\tau_K} \cdot gr(\rho_1)^{-1} \cdot gr(\rho_{23})^{-\tau_K-1}\\
    & = \lambda^{-1-2\tau_K} \cdot u^{vx} \cdot  gr(\rho_{23})^{-\tau_K-1} \\
    & \sim u^{-(vx+1)(\tau_K+1)} \cdot gr(\rho_{23})^{\tau_K+1} \cdot \lambda^{-1-2\tau_K} \cdot u^{vx} \cdot  gr(\rho_{23})^{-\tau_K-1}\\
    & = \lambda^{-1-2\tau_K} \cdot u^{-\tau_K(vx+1) - 1}\\
    &= (-1-2\tau_K;0,0;\tau_K(vx+1)+1)
\end{align*}

Comparing $ gr(a \otimes \xi_2)$ from Equation \ref{eq3} and $ gr(b_1 \otimes \mu_1)$, we get that for Equation \ref{eq} to hold, the following equality needs to hold 

\[1+2\tau_K = 2\tau_K(vx+1)+1\]

 \[\Rightarrow \tau_K = 0 \]
Which is because $vx \neq 0$, as both $v,x$ are positive integers.

We will deal with the case when $\tau_K = 0$ when we deal with $K$ such that $t \neq 0$ and $\varepsilon(K) = 0$ (see Lemma \ref{lemma13}, and the calculations right after), as $\varepsilon(K) = 0 \Rightarrow \tau_K = 0$, see Remark \ref{tau0}.

For the case when $t > 1$,

\begin{align*}
    \mu_1 = D_1 \cdot \xi_0 \Rightarrow gr(\mu_1) &= \lambda^{-1} \cdot gr(\rho_1)^{-1} \cdot gr(\xi_0) \\
    &= \lambda^{-1} \cdot gr(\rho_1)^{-1} \cdot \lambda^{-2\tau_K} \cdot gr(\rho_{23})^{-\tau_K} \\
    &= \lambda^{-2\tau_K - 1} \cdot gr(\rho_1)^{-1} \cdot gr(\rho_{23})^{-\tau_K}
    \end{align*}
Then
\begin{align*}
    gr(b_1 \otimes \mu_1) &= gr(b_1) \cdot gr(\mu_1) \\
    &= \lambda \cdot u^{vx} \cdot gr(\rho_1) \cdot  \lambda^{-2\tau_K - 1} \cdot gr(\rho_1)^{-1} \cdot gr(\rho_{23})^{-\tau_K}\\
    &= \lambda^{-2\tau_K} \cdot u^{vx} \cdot gr(\rho_{23})^{-\tau_K} \\
    & \sim u^{-\tau_K(vx+1)} \cdot gr(\rho_{23})^{\tau_K} \cdot \lambda^{-2\tau_K} \cdot u^{vx} \cdot gr(\rho_{23})^{-\tau_K}  \\
    &= \lambda^{-2\tau_K} \cdot u^{-\tau_Kvx - \tau_K + vx}\\
    &= (-2\tau_K; 0,0; \tau_Kvx + \tau_K - vx)
\end{align*}

Comparing $gr(a\otimes \xi_2)$ from Equation \ref{eq3} and $gr(b_1\otimes \mu_1)$, we get that Equation \ref{eq} happens if only if \[vx(2\tau_K - 1) =  0\] which is not true since $v,x$ are positive integers and $2\tau_K - 1$ is an odd integer.

Note that since $b_1\otimes \mu_1$ may be trivial in the homology if $t = 1$, we separately deal with that case by considering the generator $b_1' \otimes \lambda^j_{k_j}$.

 \[\cdots \xrightarrow{D_{23}/ D_3} \lambda^j_{k_j} \xrightarrow{D_2} \eta_{2j}= U^m \cdot \xi_0\] \[\Rightarrow gr(\lambda^j_{k_j}) = \lambda \cdot gr(\rho_2)\cdot gr(\eta_{2j}) = \lambda \cdot gr(\rho_2)\cdot gr(\xi_0) = \lambda \cdot gr(\rho_2) \cdot \lambda^{-2\tau_K}\cdot gr(\rho_{23})^{-\tau_K}\]

%\begin{align*}
    %gr(b_1 \otimes \kappa^1_{\ell_1})  =  gr (b_1) \cdot gr(\kappa^1_{\ell_1})
   %&=  \lambda \cdot u^{vx} \cdot gr(\rho_1)\cdot \lambda^{-1}\cdot gr(\rho_{123})^{-1} \cdot gr(\rho_{23})^{\tau_K} \\
   %&= u^{vx} \cdot gr(\rho_1) \cdot \lambda^{-1} \cdot gr(\rho_1)^{-1} \cdot gr(\rho_{23})^{-1}\cdot gr(\rho_{23})^{\tau_K} \\
   %& \sim \lambda^{-1} \cdot u^{vx} \cdot (u^{vx+1})^{\tau_K - 1} \\
   %&= \lambda^{-1} \cdot u^{\tau_K(vx+1) -1 }\\
   %&= (-1; 0,0; 1- \tau_K(vx+1))
%\end{align*}

\begin{align*}
    gr(b'_1 \otimes \lambda^j_{k_j})  = gr(b'_1) \cdot gr(\lambda^j_{k_j}) &= gr(\rho_1) \cdot \lambda \cdot gr(\rho_2) \cdot \lambda^{-2\tau_K}\cdot gr(\rho_{23})^{-\tau_K}\\
    &= \lambda^{1-2\tau_K} \cdot gr(\rho_{12})\cdot gr(\rho_{23})^{-\tau_K}\\
    & \sim \lambda \cdot gr(\rho_{12}) \cdot gr(\rho_{23})^{-\tau_K} \cdot \lambda^{-1} \cdot gr(\rho_{23})^{-r} \cdot gr(\rho_{12})^{-1} \\
    &=\lambda^{-2\tau_K} \cdot (gr(\rho_{12})\cdot gr(\rho_{23})^{-1} \cdot gr(\rho_{12})^{-1})^{(r+\tau_K)}
\end{align*}

Note that above we used that $gr(b_1') = gr(\rho_1)$, by Lemma \ref{lemma9}.

\begin{align*}
    gr(\rho_{12})\cdot gr(\rho_{23})^{-1} \cdot gr(\rho_{12})^{-1} &= gr(\rho_1) \cdot gr(\rho_2)\cdot \lambda^{-1} \cdot gr(\rho_2)^{-1}\cdot gr(\rho_3)^{-1}\cdot gr(\rho_{12})^{-1}\\
    &= \lambda^{-1} \cdot gr(\rho_1) \cdot gr(\rho_{123})^{-1}\\
    &= \lambda^{-1} \cdot gr(\rho_1)\cdot \lambda^{-1} \cdot gr(\rho_1)^{-1}\cdot gr(\rho_{23})^{-1}\\
    &=\lambda^{-2} \cdot gr(\rho_{23})^{-1}\\
    & \sim \lambda^{-2} \cdot gr(\rho_{23})^{-1} \cdot u^{-(vx+1)} \cdot gr(\rho_{23})\\
    &= \lambda^{-2} \cdot u^{-(vx+1)}
\end{align*}
\begin{align}
\label{eq4}
  \Rightarrow gr(\rho_{12})\cdot gr(\rho_{23})^{-1} \cdot gr(\rho_{12})^{-1} =   \lambda^{-2} \cdot u^{-(vx+1)}
\end{align}
Here $r$ is the framing of $K$. Since $t = 2\tau_K - r =1 \Rightarrow r = 2\tau_K-1$. 
\begin{align*}
   gr(b'_1 \otimes \lambda^j_{k_j})&=\lambda^{-2\tau_K}\cdot(gr(\rho_{12})\cdot gr(\rho_{23})^{-1} \cdot gr(\rho_{12})^{-1})^{(r+\tau_K)}\\
   &= \lambda^{-2\tau_K} \cdot (\lambda^{-2} \cdot gr(\rho_{23})^{-1})^{(r+\tau_K)}\\
   &= \lambda^{-4\tau_K-2r}\cdot gr(\rho_{23})^{-(\tau_K+r)}\\
   &\sim u^{-(vx+1)(\tau_K+r)}\cdot gr(\rho_{23})^{(\tau_K+r)} \cdot \lambda^{-4\tau_K-2r}\cdot gr(\rho_{23})^{-(\tau_K+r)}\\
   &= \lambda^{-4\tau_K-2r} \cdot u^{-(vx+1)(\tau_K+r)}= (-4\tau_K-2r;0,0;(\tau_K+r)(vx+1))
\end{align*}

Now comparing $gr(a\otimes \xi_2)$ in Equation \ref{eq3} with $gr(b'_1 \otimes \lambda^j_{k_j})$, we see that Equality \ref{eq} only holds if 
\begin{align*}
    &4\tau_K+2r= (\tau_K+r + \tau_K)(vx+1)\\
    &\Rightarrow (2\tau_K+r)(vx-1) = 0\\ 
   &\Rightarrow 2\tau_K+r = 0 \Rightarrow 4\tau_K = 1
\end{align*}
Which is a contradiction, since $\tau_K$ is an integer.
\end{subsection}

%At this point recall that torus knots are $L$-space knots (in fact, positive surgeries on torus knots produces lens space \cite{moser71}) and the knot Floer homology of those knots can be totally determined in terms of their Alexander polynomial. Also $\tau(T_{p,q}) = \frac{(p-1)(q-1)}{2} = g(K) = 1$, and rank of the knot Floer homology being 5, but that cannot happen as the only genus one torus knot is $T_{2,3}$ and the rank of the knot Floer homology of $T_{2,3}$ is 3.

\begin{subsection}{\underline{\textbf{Case 1.2: $\varepsilon(K)=1, t = 2 \tau(K) - r=0$}}}

In this case we consider the element $\xi_2$. Recall this element is the generator of the horizontal homology and $\xi_2 \in Im(\partial^{vert})$ since $\varepsilon(K) = 1$ (recall from discussions at the beginning of Section \ref{case1}). 

For this case, we consider $a \otimes \xi_0$ and $a \otimes \xi_2$. For any $\xi_{2s}$ in the vertically simplified basis of $\cfkminus$ such that $A(\xi_{2s}) = A(\xi') := a$ and let $M(\xi_{2s}) = M(\xi') := m$, (see the proof of \cite[Lemma 2.1]{hom11}). Hence

\begin{align*}
    &gr(a \otimes \xi_{2s}) = gr(\xi_{2s}) = \lambda^{m-2a} \cdot gr(\rho_{23})^{-a} \\
& \sim u^{-a(vx+1)} \cdot gr(\rho_{23})^a \cdot  \lambda^{m-2a} \cdot gr(\rho_{23})^{-a} \\
&= \lambda^{m-2a} \cdot u^{-a(vx+1)} \\
&= (m-2a;0,0; a(vx+1))
\end{align*}

Hence,

\[gr(a\otimes \xi_0) = (-2\tau_K;0,0;\tau_K(vx+1))\]
\[gr(a \otimes \xi_2) = (0;0,0;-\tau_K(vx+1))\]

Now comparing $gr(a \otimes \xi_0)$ and $gr(a \otimes \xi_2)$, we get that Equation \ref{eq} can only happen when $\tau_K = 0$. This case will be taken care of in Lemma \ref{lemma13} and the calculations right after, see Remark \ref{tau0}.

\bigskip

The following Lemma completes the proof for this case in hand.

\begin{lemma}
\label{lemma14}
  If $t = 0$,
then $a \otimes \xi_{2s}$ is non-trivial in the knot Floer homology of the cable.
\end{lemma}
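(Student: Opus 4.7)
The plan is to mirror the argument of Lemma 13, checking that no $\mathcal{A}_\infty$-relation in $\cfaminus(D^2\times S^1, T_{p,q})$ involving $a$ conspires with a coefficient map in $\cfdhat(X_K, r)$ to produce a boundary term at $a\otimes \xi_{2s}$ with trivial $U$-power, in either direction of the differential $\partial^{\boxtimes}$. The $\mathcal{A}_\infty$ structure of $\cfaminus(D^2\times S^1, T_{p,q})$ near $a$ is unchanged from the previous cases; the only modification when $t=0$ is that the unstable chain of $\cfdhat(X_K, r)$ collapses to the single arrow $\xi_0 \xrightarrow{D_{12}} \eta_0$.

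First, I would record the coefficient maps entering and leaving $\xi_{2s}$ in $\cfdhat(X_K, r)$. By hypothesis $\xi'$ is neither a generator of the vertical nor of the horizontal homology, so $\xi_{2s}$ is distinct from both $\xi_0$ and $\eta_0$. Consequently $D_{12}(\xi_{2s})=0$, and the new unstable chain contributes nothing to $\partial^{\boxtimes}(a\otimes \xi_{2s})$ or to anything mapping into it. The only nontrivial outgoing coefficients at $\xi_{2s}$ are $\xi_{2s}\xrightarrow{D_{123}}\kappa^{s}_{\ell_s}$ from its vertical chain and, after change of basis to the horizontal simplified basis, the analogous $D_3$, $D_2$, $D_{23}$ coefficients from the associated horizontal chain.

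Next, I would reuse the three $\mathcal{A}_\infty$ observations from the proof of Lemma 13 applied to the Heegaard diagram $\mathcal{H}(p,q)$: (i) there is no relation of the form $m_k(a,\rho_{123},\ldots)=c$, since near $a$ the lifted curve $\widetilde{\beta}$ has positive slope and only reverses direction at $w$; (ii) there is no relation $m_k(c,\ldots)=a$ with $c\neq a$; and (iii) the only relation involving solely $\rho_2,\rho_3,\rho_{23}$ at $a$ is the primitive-periodic-domain relation $m_k(a,\rho_3,\rho_{23},\ldots,\rho_2)=U^{vx+1}\cdot a$, which carries a positive $U$-power. Together with Lemma 9, which guarantees that every relation producing or consuming $b_1$ carries positive $U$-power, these observations show that every term of $\partial^{\boxtimes}(a\otimes \xi_{2s})$ that is not already forced to vanish carries a positive $U$-power, and symmetrically that no term of $\partial^{\boxtimes}$ can output $a\otimes\xi_{2s}$ with $U$-power zero.

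The main subtlety is checking that no short $\mathcal{A}_\infty$-relation at $a$ involving $\rho_{12}$, compounded with the $D_{12}$ arrow newly prominent for $t=0$, creates a trivially $U$-weighted boundary term touching $a\otimes \xi_{2s}$. Since $D_{12}$ is supported only on $\xi_0$ and $\xi_0\neq \xi_{2s}$, this pathway is empty, and the proof of Lemma 13 transfers verbatim to conclude that $a\otimes \xi_{2s}$ represents a nonzero class in $\hfkhat(S^3, K_{p,q})$.
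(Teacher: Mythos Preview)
Your approach is essentially the same as the paper's: check that no $\mathcal{A}_\infty$-relation at $a$ pairs with a coefficient map in the vertical, horizontal, or unstable chain to produce a $U^0$-term at $a\otimes\xi_{2s}$. The vertical and horizontal checks you give match the paper's almost verbatim.

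The one place where you and the paper diverge is the unstable chain, and here your argument is a bit thinner than the paper's. You dispose of the $D_{12}$ arrow by asserting that $\xi_{2s}$ is distinct from both $\xi_0$ and $\eta_0$, on the grounds that $\xi'$ was chosen not to generate the vertical or horizontal homology. But the hypothesis is on $\xi'$, not on $\xi_{2s}$; all that links them is equality of Alexander and Maslov gradings (via the proof of \cite[Lemma~2.1]{hom}), and that alone does not obviously preclude $\xi_{2s}=\eta_0$ after the change of basis. The paper does not rely on this identification: it allows the possibility $\xi_{2s}=\eta_0$ and instead rules out any incoming contribution by observing that there is no $\mathcal{A}_\infty$-relation of the form $m_2(c,\rho_{12})=a$ in $\cfaminus(D^2\times S^1,T_{p,q})$, i.e.\ no Whitney disk bounded by $\beta$ on the right and $\alpha$-arcs on the left starting at $a$. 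This covers the unstable chain unconditionally. If you either justify $\xi_{2s}\neq\eta_0$ more carefully or add this $\rho_{12}$ observation, your proof is complete and matches the paper's.
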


\begin{proof} We check that there is no possibility of any $\mathcal{A}_{\infty}$ relations in $\cfahat(D^2 \times S^1, T_{p,q})$ involving $a$, which has exact same coefficients from the vertical, horizontal and unstable chain of $\cfdhat(X_K,r)$.

To do that, first we check the vertical chain in $\cfdhat(X_K,r)$ and the coefficient maps from Theorem \ref{lotmain}. As we mentioned in the Remark \ref{remark}, there is no disk connecting $a$ and starting with coefficient $\rho_{123}$, since in the algorithm, the $\beta$ curve always lie along the standard form of the knot, and only changes its direction around $w$. Hence there is no $\mathcal{A}_{\infty}$ relation such as $m_k(a,\rho_{123},\cdots) = c$, for some $c$.

Now, we check the horizontal chain in $\cfdhat(X_K,r)$ and the coefficient maps from Theorem \ref{lotmain}. We can see that the only relation involving $\rho_2, \rho_3$ and $\rho_{23}$ and involving $a$ is $m_k(a, \rho_3, \underbrace{\rho_{23}, \cdots, \rho_{23}}_{n},\rho_2) = U^{n+1} \cdot a$, as the said relation indicates a positive multiple of the primitive periodic domain. Thus the element $a \otimes \xi_{2s}$ is non-zero in the knot Floer homology of the cable.

Lastly we check the unstable chain in $\cfdhat(X_K,r)$ and coefficient maps there when $t = 0$. There is a single boundary component concerning $\xi_0$ and $\eta_0$ with coefficient $D_{12}$. Even if $\xi_{2s}$ is $\xi_0$ or $\eta_0$, $a \otimes \xi_{2s}$ is still non-trivial in the knot Floer homology of the cable. This is because 
there is no $\mathcal{A}_{\infty}$ relation such as $m_2(c,\rho_{12}) = a$ or $m_2(a,\rho_{12}) = c$, for any $c$, in other words, there is no Whitney disk, connecting $a$ and some other intersection point $c$, without the basepoints, covering $\rho_{12}$.

Hence our claim is proved.

\end{proof}
\end{subsection}

\begin{subsection}{\underline{\textbf{Case 2: $\varepsilon(K)=0$}}}
\label{case2}

For this case we consider an element $\xi \in \hfkhat(K) $, which lies in the lowest Alexander grading i.e. $A(\xi) = -g$. Since $\varepsilon(K)= 0$ implies $\tau_K = 0$, there is an element $\xi' \in \hfkhat(K)$ such that $\partial^{vert} \xi' = \xi$.  Then there is some $\xi_{2s}$ in the vertically simplified basis of $\cfkminus$ such that $A(\xi_{2s}) = A(\xi) = -g$ and let $M(\xi_{2s}) = M(\xi) := m$, (see the Proof of \cite[Lemma 2.1]{hom11}). Then 
\[gr(a \otimes \xi_{2s}) = (m+2g;0,0;-g(vx+1))\] 

Now we consider a \textit{dual} element $\tilde{\xi}$ of $\xi$ in the vertically simplified basis of the knot i.e. $A(\tilde{\xi}) = g, M(\tilde{\xi}) = m + 2g$. Since $\tau_K = 0$, there exists $\xi'' \in \widehat{HFK}$ such that $\partial^{vert} (\tilde{\xi}) = \xi''$. Hence there exist $\kappa \in \widehat{CFD}(X_K)$ such that 
$\tilde{\xi} \xrightarrow{D_1} \kappa $. Hence
\[gr(\kappa) = \lambda^{-1} \cdot gr(\rho_1)^{-1} \cdot gr(\tilde{\xi}) = \lambda^{-1} \cdot gr(\rho_1)^{-1} \cdot \lambda^{m} \cdot gr(\rho_{23})^{-g} \]

\begin{align*}
    gr(b_1 \otimes \kappa) = \lambda \cdot u ^{vx} \cdot gr(\rho_1) \cdot \lambda^{-1} \cdot gr(\rho_1)^{-1} \cdot \lambda^{m} \cdot gr(\rho_{23})^{-g} \\
    = \lambda^{m} \cdot u^{vx} \cdot gr(\rho_{23})^{-g}\\
    \sim u^{-g(vx+1)}\cdot gr(\rho_{23})^g \cdot \lambda^{m} \cdot u^{vx} \cdot gr(\rho_{23})^{-g}\\
    = \lambda^{m} \cdot u^{-vgx+vx-g} \\
    = (m;0,0;vgx-vx+g)
\end{align*}

\begin{lemma}
\label{lemma13}
If $\tau_K = 0$, $a \otimes \xi_{2s}$ and $b_1 \otimes \kappa$ are non-trivial in the knot Floer homology of the cable $K_{p,q}$, if $A(\xi_{2s}) \neq 0$, where $\kappa$ is as described above.
\end{lemma}
\noindent Note that the above lemma holds independent of $t$.
\begin{proof}
  Since $\tau_K = 0$, $\xi_{2s}$ is not a generator of either vertical or horizontal homology. Thus we just have to check that there is no possibility of any $\mathcal{A}_{\infty}$ relations in $\cfahat(D^2 \times S^1, T_{p,q})$ involving $a$, which has the same coefficients from the vertical and horizontal chain of $\cfdhat(X_K,r)$.

To do this, first we check the vertical chain in $\cfdhat(X_K,r)$ and the coefficient maps from Theorem \ref{lotmain}. We said in the proof of Lemma 12, there is no disk connecting $a$ and starting with coefficient $\rho_{123}$, since in the algorithm, the $\beta$ curve always lie along the standard form of the knot, and only changes its direction around $w$. Hence there is no $\mathcal{A}_{\infty}$ relation such as $m_k(a,\rho_{123},\cdots) = c$, for some $c$.

Now, we check the vertical chain in $\cfdhat(X_K,r)$ and the coefficient maps from Theorem \ref{lotmain}. We can see that the only relation involving $\rho_2, \rho_3$ and $ \rho_{23}$ and involving $a$ is $m_k(a, \rho_3, \rho_{23}, \cdots, \rho_2) = U^{l} \cdot a$, where $l$ is non-zero, as the said relation indicates a positive multiple of the primitive periodic domain. Thus the element $a \otimes \xi_{2s}$ is non-zero in $\hfkhat(K_{p,q})$.

Now we note that $b_1\otimes \kappa$ is a non-zero generator in the homology. This is because, as mentioned above, any Whitney disk, containing no $z, w-$ basepoints, connecting $b_1$ has a $\rho_3$ on its boundary. While there is no such algebra relations concerning $\kappa$.

Hence our claim is proved.
\end{proof}

Now comparing $gr(a \otimes \xi_{2s})$ and $gr(b_1 \otimes \kappa)$ we get that the equality \ref{eq} can only occur when \[ vx(2g-1) = 0\] which is not true since $vx >0, g>0$.

\begin{remark}
\label{tau0}
Note that to prove this case, we only used the fact that $\tau_K = 0$, which is a weaker condition that $\varepsilon(K) = 0$. The latter implies that former, but not the other way around. Hence the case can be though of as proving that $K_{p,q}$ is non Floer-thin when $\tau_K = 0, q>1$ and $K$ is a non-trivial knot.
    
\end{remark}

\end{subsection}

\begin{subsection}{\underline{\textbf{Case 3: $\varepsilon(K)=-1$}}} 
\label{case 3}

For $\varepsilon(K)=-1$, \cite[Section 4.2]{hom11} implies that 
\begin{itemize}
    \item $\xi_0$ is the generator of the vertical homology.
    \item $\partial^{vert}(\xi_1) = \xi_2$.
    \item $\xi_1$ is the generator of the horizontal homology i.e. $\eta_0 = U^{k} \cdot \xi_1$, for some $k$.
\end{itemize}

Notice that the symmetry of $\hfkhat(S^3,K)$ in terms of $i,j$ grading implies that since $\partial^{vert}(U^{k}\cdot \xi_1) = \partial^{vert}(\eta_0) \neq 0$, we have that $\partial^{hor}(\xi_0) \neq 0$. Let $\eta_{2j-1}:= U^{m} \cdot \xi_0 $, for some $m, j$. Thus there exists $\lambda^1_j$ such that $\xi_0= \eta_{2j-1} \xrightarrow{D_3} \lambda_1^j$.

\begin{lemma} \label{epsilon-1}
For $\varepsilon(K) = -1$, the following generators are non-trivial in $\hfkhat(S^3, K_{p,q})$:
\begin{itemize}
    \item $b_1 \otimes \kappa_1^1, a\otimes \xi_2$, for any value of $t$.
    \item $a \otimes \xi_0$, for $t \leq 0$.
    \item $b_1\otimes \mu_1$, for $t >1$.
    \item $b_1' \otimes \lambda^j_1$, for $t=1$.
\end{itemize}
where $\lambda^j_1$ is as mentioned above.
    
\end{lemma}

\begin{proof}
    Recall that there is no Whitney disk connecting $a, b_1, b'_1$ in the doubly pointed Heegaard diagram $\mathcal{H}(p,q)$ can have a $\rho_{123}$ and without any basepoints inside. This is because for any Whitney disk without the base points connecting $a$, $\Tilde{\beta}$ always has negative slope in $\mathcal{H}(p,q)$. This eliminates the possibility of having a box tensor pairing in the boundary with $\mathcal{A}_{\infty}$ relation $D_{123}$.

    To check that both $b_1 \otimes \kappa_1^1, a\otimes \xi_2$ are non-zero generators in the homology of the cable, we need to check only the vertical and horizontal chain in $\cfdhat(X_K)$. 
    
    Note that there is no algebra relations of the form $m_{i+2}(c,\underbrace{\cdots}_i,\rho_1)=b_1$ and \\ $m_{k+1}(c, \underbrace{\rho_{23}, \cdots, \rho_{23}}_k)=b_1$, since any Whintey disk connecting $b_1$ without any $w,z-$base points contains $\rho_2$ on its boundary. Also note that any Whitney disk connecting $a$ without any $z,w-$basepoints contains $\rho_1$ on its boundary. These prove that $b_1 \otimes \kappa_1^1, a\otimes \xi_2$ are non-trivial elements in the homology.

    For $t\leq 0$, the above mentioned argument about $a$ shows that $a \otimes \xi_0$ is non-zero in the homology.

    Similarly for $t>1$, the above mentioned argument about $b_1$ shows that $b_1\otimes \mu_1$ is non-zero in the homology. 

    For $t=1$, $b'_1 \otimes \lambda_1^j$ is non-zero in the homology, since any Whitney disk connecting $b'_1$ without any basepoints always has a $\rho_1$ on its boundary. Indeed, there is no algebra relation connecting $\lambda^j_1$ which has a $\rho_1$ differential. 

\end{proof}

Notice that $\eta_0 = \xi_1 \xrightarrow{D_1} \kappa^1_1$ implies 
\[gr(\kappa_1^1) = \lambda^{-1} \cdot gr(\rho_1)^{-1} \cdot gr(\xi_1) =  \lambda^{-1} \cdot gr(\rho_1)^{-1} \cdot \lambda^{0} \cdot gr(\rho_{23})^{\tau_K}\]

Then we proceed to calculate the grading of the first two generators:
\begin{align*}
gr(b_1 \otimes \kappa^1_1) &= gr(b_1) \cdot gr(\kappa^1_1) \\ &= 
  \lambda^{-1} \cdot u^{vx} \cdot gr(\rho_1) \cdot \lambda^{-1} \cdot gr(\rho_1)^{-1}\cdot gr(\rho_{23})^{\tau_K} \\
  &\sim \lambda^{-2} \cdot u^{vx} \cdot gr(\rho_{23})^{\tau_K} \cdot u^{\tau_K(vx+1)} \cdot gr(\rho_{23})^{-\tau_K} \\
  &= \lambda^{-2} \cdot u^{vx(\tau_K+1))+\tau_K} \\
  & = (-2;0,0; - (vx(\tau_K+1)+\tau_K))
\end{align*}

\begin{align*}
    gr(a \otimes \xi_0) = gr(a) \cdot gr(\xi_0) = gr(\xi_0) &= \lambda^{-2\tau_K} \cdot gr(\rho_{23})^{-\tau_K} \\
    &\sim \lambda^{-2\tau_K} \cdot gr(\rho_{23})^{-\tau_K} \cdot u^{-\tau_K(vx+1)} \cdot gr(\rho_{23})^{\tau_K} \\
    &= (-2\tau_K;0,0;\tau_K(vx+1))
\end{align*}

Thus for $t \leq 0$, we compare the grading of the above generators to get that Equation \ref{eq} can only hold when 
\[2\tau_K - 2 = 2\tau_K + 2\tau_Kvx+vx \Rightarrow vx(2\tau_K+1) = -2\]
Since $vx \geq 2$, the above implication only occurs when $vx=2, \tau_K=-1$ i.e. \[dim(\hfkhat(S^3, K)) = 2vx-1=3\] which implies that the companion $K$ is the left hand trefoil, or $T_{2,-3}$ \cite{balvel}.

\begin{figure}[ht]
    \centering
    \begin{tikzcd}
	{\xi_1} & {\mu_{|t|}} \\
	{\kappa^1_1} & {} & {\mu_1} \\
	{\xi_2} & {\lambda^1_1} & {\xi_0}
	\arrow["{D_{123}}", from=3-1, to=2-1]
	\arrow["{D_3}", from=3-2, to=3-1]
	\arrow["{D_2}", from=3-3, to=3-2]
	\arrow["{D_1}"', from=1-1, to=2-1]
	\arrow["{D_{123}}"', from=3-3, to=2-3]
	\arrow["{D_2}"', from=1-2, to=1-1]
	\arrow["{D_{23}}"', dashed, from=2-3, to=1-2]
\end{tikzcd}
   \caption{$\cfdhat(X_K)$ where $K = T_{2,-3}$. Note that the dotted arrow indicate there are $|t|$ many generators with the boundary coefficient $D_{23}$ in between them}
\end{figure}

\[gr(a\otimes \xi_2) = gr(a) \cdot gr(\xi_2) = \lambda \cdot gr(\rho_{23})^{0} = (1;0,0;0)\] Comparing $gr(a\otimes \xi_0), gr(a \otimes \xi_2)$, we get that Equation \ref{eq} can happen if \[vx+1=1 \Rightarrow vx=0\] which is a contradiction since $vx \geq 2$.

Then we deal with the subcase of $t >1$. 

\begin{align*}
    gr(b_1 \otimes \mu_1) = (-2\tau_K; 0,0; \tau_Kvx + \tau_K - vx)
\end{align*}

The last equation is obtained by using the grading calculations made for the case of $\varepsilon(K) = 1, t > 1$, since the grading calculation did not use any of $\varepsilon(K)=1$ properties.

Comparing $gr(b_1 \otimes \kappa^1_1), gr(b_1 \otimes \mu_1)$, we get that Equation \ref{eq} when \[2\tau_K - 2 = 2\tau_Kvx+ 2\tau_K \Rightarrow \tau_Kvx = -1\]

which is a contradiction since $vx \geq 2$ and $\tau_K$ is an integer.

Towards completion, we deal with the $t=1$ case. 

 Since $\xi_0= \eta_{2j-1} \xrightarrow{D_3} \lambda_1^j$. Thus 
\begin{align*}
 gr(\lambda^j_1) = \lambda^{-1}\cdot gr(\rho_3)^{-1} \cdot gr(\xi_0) &= \lambda^{-1}\cdot gr(\rho_3)^{-1} \cdot \lambda^{-2\tau_K}\cdot gr(\rho_{23})^{-\tau_K} \\
 &=\lambda^{-1-2\tau_K} \cdot gr(\rho_3)^{-1} \cdot gr(\rho_{23})^{-\tau_K}   
\end{align*}

\begin{align*}
    gr(b'_1 \otimes \lambda_1^j) &= gr(b'_1) \cdot gr(\lambda_1^j) \\ &= \rho_1 \cdot \lambda^{-1-2\tau_K} \cdot gr(\rho_3)^{-1} \cdot gr(\rho_{23})^{-\tau_K}\\ 
    &= \lambda^{-1-2\tau_K} \cdot gr(\rho_{12}) \cdot gr(\rho_{12})^{-1} \cdot gr(\rho_1) \cdot gr(\rho_3)^{-1} \cdot gr(\rho_{23})^{-\tau_K}\\
    &= \lambda^{-1-2\tau_K} \cdot gr(\rho_{12}) \cdot gr(\rho_2)^{-1} \cdot gr(\rho_1)^{-1} \cdot gr(\rho_1) \cdot gr(\rho_3)^{-1} \cdot gr(\rho_{23})^{-\tau_K}\\
    &=  \lambda^{-2\tau_K} \cdot gr(\rho_{12}) \cdot gr(\rho_{23})^{-1-\tau_K} \\
    &\sim \lambda^{-2\tau_K} \cdot gr(\rho_{12}) \cdot gr(\rho_{23})^{-1-\tau_K}  \lambda^{-1} \cdot gr(\rho_{23})^{-r} \cdot gr(\rho_{12})^{-1} \\
    &= \lambda^{-2\tau_K-1} \cdot (gr(\rho_{12}) \cdot gr(\rho_{23})^{-1} \cdot gr(\rho_{12})^{-1})^{r+\tau_k+1} \\
    &= \lambda^{-2\tau_K-1} \cdot (\lambda^{-2} \cdot gr(\rho_{23})^{-1})^{r+\tau_K+1}   \text{[by Equation \ref{eq4}]} \\
    &= \lambda^{-4\tau_K-2r-3}\cdot gr(\rho_{23})^{-(r+\tau_K+1)} \\
    &\sim \lambda^{-4\tau_K-2r-3}\cdot u^{-(vx+1)(r+\tau_K+1)}\\
    &= (-4\tau_K-2r-3;0,0;(vx+1)(r+\tau_K+1))
    \end{align*}

Comparing $gr(b'_1 \otimes \lambda_1^j), gr(b_1 \otimes \kappa^1_1)$ we see that Equation \ref{eq} occurs when
\begin{align*}
    4\tau_K+2r+1 = (vx+1)(r+2\tau_K+1)+vx &\Rightarrow (4\tau_K+1)(vx-1)=-2\\
    &\Rightarrow 4\tau_K+1 = -1 \hspace{.2in}  \text{or}, \hspace{.2in}  4\tau_K+1 = -2
\end{align*}

Note that in the above calculation we used the fact that $t = 2\tau_K - r = 1$. The last equations occurs since $vx-1 \geq 1$. The last two implications are contradictions as $\tau_K$ is an integer.
\end{subsection}

\begin{subsection}{\underline{$K_{p,1}$ is not Floer-thin}}

For the final case where the pattern knot is $T_{p,1}$, we will use the $\mathcal{A}_{\infty}$ relations described in \cite{hom11} and \cite{petkova13} and follow the strategy along the lines in that for $q \neq \pm 1$ case. For the convenience of the reader, we list the $\mathcal{A}_{\infty}$ relations from \cite{petkova13}, corresponding to Figure \ref{fig4}.

\begin{align*}
    m_1(b_k) &= b_{2p-k-1}  \hspace{.2in} \Delta_U= p-k  \hspace{.2in}  1\leq k \leq p-1 \\ 
    m_{3+i}(b_k, \rho_2, \underbrace{\rho_{12}, \cdots, \rho_{12}}_{i}, \rho_1) &= b_{k+i+1}  \hspace{.2in} \Delta_U= i+1  \hspace{.2in} \substack{1\leq k \leq p-2\\ 0 \leq i \leq p-k-2} \\
    m_{3+i}(b_k, \rho_2, \underbrace{\rho_{12}, \cdots, \rho_{12}}_{i}, \rho_1) &= b_{k-i-1} \hspace{.2in} \Delta_U= 0  \hspace{.2in} \substack{ p+1 \leq k \leq 2p-2\\ 0 \leq i \leq k-1-p} \\
  m_{2+i}(a,\underbrace{\rho_{12},\cdots, \rho_{12}}_{i}, \rho_1) &= b_{2p-i-2} \hspace{.2in} \Delta_U= 0 \hspace{.2in} 0\leq i \leq p-2 \\ 
   m_{4+i+j}(a, \rho_3, \underbrace{\rho_{23}, \cdots, \rho_{23}}_{j}, \rho_2, \underbrace{\rho_{12},\cdots,\rho_{12}}_{i}, \rho_1) &= b_{i+1} \hspace{.2in} \Delta_U= pj+i+1  \hspace{.2in} \substack{0\leq i \leq p-2 \\ 0\leq j } \\
   m_{3+j} (a, \rho_3, \underbrace{\rho_{23}, \cdots, \rho_{23}}_j, \rho_2) &= a\hspace{.2in} \Delta_U= p(j+1)  \hspace{.2in} j\geq 0
    \end{align*}
    
    From the last relation, plugging $j=0$ we get that in this case, the grading set for $CFA^{-}$ is isomorphic to $gr(\rho_{23}) \cdot u^{-p} / \Tilde{G}$.

As before, we divide our search for $(p,1)$ cables dividing into three subcases, as the value of $\varepsilon(K)$ being $0,-1,1$. 

For $\varepsilon(K) = 0$, we can proceed exactly as in case for $(p,q)$ cabling and $t \neq 0, \varepsilon(K) = 0$. One only needs to notice that $a \otimes \xi_{2s}$ is also a non-zero generator for $(p,1)$ cables as well. This is since there the only $\mathcal{A}_{\infty}$ relations regarding $a$ (without any $U$ power) are
\[m_{2+i}(a,\underbrace{\rho_{12},\cdots, \rho_{12}}_{i}, \rho_1) = b_{2p-i-2}\]
and there are no such relations concerning $\xi_{2s}$ in in $\widehat{CFD}(X_K)$. 

Also the gradings of the mentioned elements in the knot Floer homology of $(p,1)$ cables are:
\[gr(a \otimes \xi_{2s}) = (m+2g;0,0;-gp)\] \[gr(b_1 \otimes \kappa) = (m;0,0;gp-p+1)\]

Which does not satisfy Equation \ref{eq}.

When $\varepsilon(K) = 1$ and $t \neq 0$, the proof for the case $T_{p,q}, q>1$ holds true for $q=1$ as well (by replacing $vx+1$ with $p$ in the grading calculations for the $(p,q)$ cable, and by the fact that $p \geq 2$).

For $\varepsilon(K) = 1$ and $t=0$, we consider $a \otimes \xi_0$ and $a \otimes \xi_2$ (recall from discussions at the beginning of Section \ref{case1}). Both are non-zero elements in the knot Floer homology of the $(p,1)$ cable. This follows from the fact that there is no $\mathcal{A}_{\infty}$ relations in $\widehat{CFD}(X_K)$ containing $D_{12}, D_{12}, \cdots, D_1$ corresponding to $\xi_0$ and $\xi_2$, when $t=0$.  

Now \[gr(a\otimes \xi_0) = gr(\xi_0) = \lambda^{-2\tau_K} \cdot gr(\rho_{23})^{-\tau_K} = (-2\tau_K;0,0;p\tau_K)\] 
\[gr(a\otimes \xi_2) = gr(\xi_2) = gr(\rho_{23})^{\tau_K} = (0;0,0;-p\tau_K)\]

Which satisfies equality \ref{eq} only if 
\[(p-1)\tau_K = 0 \Rightarrow \tau_K =0\]

For $\tau_K = 0$, we can proceed the same way as we did for the case $\epsilon(K) = 0$, as in that previous proof we only used the fact that $\epsilon(K) = 0 \Rightarrow \tau_K = 0$.

For the case when the pattern is $(p,1)$ and when $\varepsilon(K)= -1$, our strategy will be similar to the previous $\varepsilon(K)=-1$ for non-trivial torus knot patterns. 

First we note that from the $\mathcal{A}_{\infty}$ relations, it is easy to see that any element of the form $b_1\otimes \cdot$ is a non-zero generator in $\hfkhat(S^3, K_{p,pr+1})$, since any relations corresponding to the intersection $b_1$ has a non-zero $\Delta_U$ filtration. As before, we consider $b_1 \otimes \kappa^1_1$, for any value of $t$. For $t \neq 0$, we consider $b_1 \otimes \mu_1$. Note that we can reuse the grading calculation of for $b_1 \otimes \kappa^1_1, b_1 \otimes \mu_1$ with only replacing $vx+1$ by $p$, which is the number of $w-$basepoints in the periodic domain connecting $a$. Thus

\[gr(b_1 \otimes \kappa^1_1) = (-2;0,0;-(p\tau_K+p-1))\]
\[gr(b_1 \otimes \mu_1)= (-2\tau_K;0,0; p\tau_K-p+1)\]

Comparing the above grading we see that Equality \ref{eq} can only occur if 
\[\tau_K-1 = p\tau_K \Rightarrow (p-1)\tau_K=-1 \Rightarrow p=2, \tau_K=-1\]
where the last equality follows from the fact that $p \geq 2$.

For the remaining cases when $t \neq 0$, we can consider $b'_1 \otimes \lambda^j_{1}$, as earlier, but here we notice that $b'_1$ is $b_{2p-2}$ in Figure \ref{fig4}. Note that it is also a non-zero element in the homology as the only $\mathcal{A}_{\infty}$ relations with $\Delta_U=0$ involving $b_{2p-2}$ does not have $\rho_3$ or $\rho_{23}$ coefficients. Now we calculate the grading of this generator using the calculation from before and replacing $vx+1$ by $p$ and putting $\tau_K=-1, p=2$. 

\[gr(b_{2p-2}\otimes \lambda^j_1) = (1-2r;0,0;2r)\]

Comparing this with $gr(b_1 \otimes \kappa^1_1)$ we get that Equality \ref{eq} can never occur. 

For the remaining case of $t=0$, we consider, as before, the element $a\otimes \xi_0$. Notice that this is also non-zero in the homology as there is no $\mathcal{A}_{\infty}$ relations concerning $a$ with $\Delta_U=0$. By the calculations from before,

\[gr(a\otimes \xi_0) = (-2\tau_K;0,0;p\tau_K)\]

Comparing $gr(a\otimes \xi_0), gr(b_1 \otimes \kappa^1_1)$, we see that equality \ref{eq} can only occur if 

\[(1-p)(2\tau_K+1) = 2\] since $p \geq 2$, it follows that the above equality can only occur if $p=3, \tau_K=-1$. Note that this is the exact situation we encountered for the pattern $T_{p,q}, q>1$ and $\varepsilon(K) = -1$. We deal with this case similarly and consider $gr(a\otimes \xi_2)$. Then by comparing $gr(a\otimes \xi_0)$, $gr(a\otimes \xi_2)$ we arrive at a contradiction, as before.

\end{subsection}

 This completes our proof. \qedsymbol
  \vspace{5mm}

%%%%%%%%%%%%%%%%%%
\bibliographystyle{amsplain}

\end{document}